\DeclareMathOperator*{\argmin}{arg\,min}
\newcommand{\blind}{0}
\begin{document}

\def\spacingset#1{\renewcommand{\baselinestretch}%
{#1}\small\normalsize} \spacingset{1}


\def\bmE{\boldsymbol{\mathcal{E}}}
\def\bmlambda{\boldsymbol{\lambda}}
\def\bmrho{\boldsymbol{\rho}}
\def\Cov{{\rm Cov}}
\def\rmvec{{\rm vec}}
\def\bmSigma{\boldsymbol{\Sigma}}
\def\mbR{\mathbb{R}}
\def\mbW{\mathbb{W}}
\def\bmI{\boldsymbol{I}}
\def\bmy{\boldsymbol{y}}
\def\bmvarepsilon{\boldsymbol{\varepsilon}}
\def\bmxi{\boldsymbol{\xi}}
\def\hbmbeta{\boldsymbol{\hat\beta}}
\def\tbmbeta{\boldsymbol{\tilde\beta}}
\def\bbmbeta{\boldsymbol{\bar\beta}}
\def\mL{\mathscr{L}}
\def\tmL{\widetilde {\mL}}
\def\lbk{\left\{}
\def\rbk{\right\}}
\def\lak{\left|}
\def\rak{\right|}
\def\lmk{\left[}
\def\rmk{\right]}
\def\lsk{\left(}
\def\rsk{\right)}
\def\bmx{\boldsymbol{x}}
\def\bmz{\boldsymbol{z}}
\def\bmv{\boldsymbol{v}}
\def\bmmu{\boldsymbol{\mu}}
\def\bmzeta{\boldsymbol{\zeta}}
\def\bmsigma{\boldsymbol{\sigma}}
\def\bmbeta{\boldsymbol{\beta}}
\def\var{{\rm var}}
\def\bml{\boldsymbol{1}}
\def\bmo{\boldsymbol{0}}
\def\diag{{\rm diag}}
\def\bmrho{\boldsymbol{\rho}}
\def\E{{\rm E}}
\def\bmgamma{\boldsymbol{\gamma}}
\def\bmalpha{\boldsymbol{\alpha}}
\def\bmvtheta{\boldsymbol{\vartheta}}
\def\laak{\left\|}
\def\raak{\right\|}
\def\OLS{{\rm OLS}}
\def\GLS{{\rm GLS}}
\def\FGLS{{\rm FGLS}}
\def\LS{{\rm LS}}
\def\FLS{{\rm FLS}}
\def\bmtheta{\boldsymbol{{\mathcal{B}}}}
\def\tr{{\rm tr}}
\def\mbY{\mathbb{Y}}
\def\mbX{\mathbb{X}}
\def\rmE{{\rm E}}
\def\rmP{{\rm P}}
\def\Cov{{\rm Cov}}
\def\Var{{\rm Var}}
\def\mS{{\mathcal{S}}}
\def\mU{{\mathcal{U}}}
\def\mN{{\mathcal{N}}}
\def\mV{{\mathcal{V}}}
\def\sgn{\operatorname{sgn}}
\def\bu{\boldsymbol u}
\def\mR{\mathbb{R}}
\def\bmY{\mathbf{Y}}
\def\bmW{\mathbf{W}}
\def\bmV{\mathbf{V}}
\def\bmQ{\mathbf{Q}}
\def\bmX{\mathbf{X}}
\def\bmSigma{\mathbf{\Sigma}}

\def\be{\begin{equation}}
\def\ee{\end{equation}} 
\def\ben{\begin{equation*}}
\def\een{\end{equation*}}
\def\bea{\begin{eqnarray}}
\def\eea{\end{eqnarray}}
\def\bda{\begin{eqnarray*}}
\def\eda{\end{eqnarray*}}
\numberwithin{equation}{section}

\newtheorem{definition}{Definition}
\newtheorem{theorem}{Theorem}
\newtheorem{proposition}{Proposition}
\newtheorem{corollary}{Corollary}
\newtheorem{assumption}{Assumption}
\newtheorem{lemma}{Lemma}
\newtheorem*{remark}{Remark}

\theoremstyle{definition}
\newtheorem{exmp}{Example}[section]
\AtEndDocument{\refstepcounter{theorem}\label{finalthm}}
\AtEndDocument{\refstepcounter{proposition}\label{finalprop}}
\newcommand{\pkg}[1]{{\normalfont\fontseries{b}\selectfont #1}} \let\proglang=\textsf \let\code=\texttt


\if0\blind
{
  \title{\bf Generalized Score Matching for Regression}
  \author[*]{Jiazhen Xu}
  \author[*]{Janice L. Scealy}
  \author[*]{Andrew T. A. Wood}
  \author[*]{Tao Zou}
  \affil[*]{Research School of Finance, Actuarial Studies and Statistics, Australian National University, Canberra ACT 2601, Australia}
  \maketitle
} \fi

\if1\blind
{
  \bigskip
  \bigskip
  \bigskip
  \begin{center}
    {\LARGE\bf Generalized Score Matching for Regression}
\end{center}
  \medskip
} \fi

\bigskip

\begin{abstract}
Many probabilistic models that have an intractable normalizing constant may be extended to contain covariates. Since the evaluation of the exact likelihood is difficult or even impossible for these models, score matching was proposed to avoid explicit computation of the normalizing constant. In the literature, score matching has so far only been developed for models in which the observations are independent and identically distributed (IID). However, the IID assumption does not hold in the traditional fixed design setting for regression-type models. To deal with the estimation of these covariate-dependent models, this paper presents a new score matching approach for independent but not necessarily identically distributed data under a general framework for both continuous and discrete responses, which includes a novel generalized score matching method for count response regression. We prove that our proposed score matching estimators are $\sqrt{n}$-consistent and asymptotically normal under mild regularity conditions. The theoretical results are supported by simulation studies and a real-data example. Additionally, our simulation results indicate that, compared to approximate maximum likelihood estimation, the generalized score matching produces estimates with substantially smaller biases in an application to doctoral publication data.
\end{abstract}
\noindent%
{\it Keywords:}  intractable normalizing constant, Fisher divergence, Conway-Maxwell-Poisson regression
\vfill

\newpage
\spacingset{1.9} 

\section{Introduction}

Many probabilistic models contain an intractable normalization constant which does not have a closed form and is difficult to compute. Standard methods of estimation, such as maximum likelihood estimation, are computationally intensive for these models, if even feasible. Several methods of approximating intractable normalizing constants have been studied (\citealt{minka2003computing,brooks2011handbook,huber2015approximation}). However, approximating normalizing constants will introduce bias into estimation. To tackle this issue, score matching and its extensions have been developed (\citealt{hyvarinen2005estimation,hyvarinen2007some,vincent2011connection,lyu2012interpretation,song2020sliced}) to avoid the explicit computation of the normalizing constant. Score matching is a powerful method for performing parameter estimation in previously intractable models.

Score matching for continuous data is based on the Fisher divergence (\citealt{johnson2004information,dasgupta2008asymptotic}). In the score matching procedure, researchers from machine learning define the score function to be the derivative of the log density with respect to the sample data, i.e., a sample space derivative, as opposed to the derivative with respect to the parameter. In this paper, we use the sample space derivative definition of the score function. The principle of score matching is to minimize the expected squared distance between the score functions given by the parametrized density and the true density, respectively. We refer to this distance as the score matching objective function. \cite{hyvarinen2005estimation} showed that, under mild regularity conditions, the score matching objective function has a tractable version which does not depend on the unknown true probability distribution. However, we may not be able to obtain the tractable version of the objective function when the true density is not supported on all of real space. To deal with this problem, weight functions are introduced so that the boundary condition required for deriving a tractable objective function is satisfied (\citealt{hyvarinen2007some,yu2019generalized,liu2019estimating,scealy2020score}). 

Because the basic form of score matching is valid only for models which define a differentiable density function, two variants of score matching have been proposed for discrete data (\citealt{hyvarinen2007some,lyu2012interpretation}). \cite{hyvarinen2007some} extended score matching to deal with binary data and this extended method is called ratio matching. Ratio matching is based on minimizing the expected squared distance of the ratios of certain probabilities given by the model and the corresponding ratios in the observations. However, ratio matching and the generalized score matching proposed by \cite{lyu2012interpretation} cannot be applied to univariate discrete data, as these methods still suffer the explicit computation of the intractable normalization constant. 

In practice, score matching and its variants for probabilistic models has been widely used in graphical models (\citealt{yu2019generalized}), generative adversarial networks (\citealt{pang2020efficient}), data from Riemannian manifolds, such as the sphere data from Riemannian manifolds with boundary, especially compositional data (\citealt{mardia2016score,scealy2020score}). In many real-life situations, the structure of these probabilistic models may further depend on other extraneous factors in the form of explanatory variables or covariates, which result in covariate-dependent probabilistic models. Examples of such probabilistic models can be found in genetic studies (\citealt{yin2011sparse,cai2013covariate,cheng2014sparse}), computer vision (\citealt{gustafsson2020energy}) and network data analysis (\citealt{yuan2021community,zhao2021dimension}). This increasing demand for fitting covariate-dependent models with an intractable normalizing constant motivates the development of score matching approaches for regression-type models. \cite{hyvarinen2005estimation} presented a general framework for score matching which allows for the possibility of incorporating covariates but so far score matching for regression-type models under a fixed design has not been developed in the literature. Additionally, most of the variants of score matching for discrete data only focus on binary data. However, there is a need to develop score matching methods for regression-type models for count data as there exist some widely used probabilistic models with an intractable normalizing constant for count data, such as Conway-Maxwell-Poisson (CMP) regression models (\citealt{sellers2010flexible,sellers2020conway}).

This paper has the following novel contributions: (i) develop score matching for regression-type models with continuous response; (ii) propose a novel generalized score matching method for regression-type models for count data; and (iii) detailed development of the estimation theory of score matching for independent but not necessarily identically distributed (INID) observations under the fixed design setting for regression-type models. Under a general framework encompassing truncated regression models, size-biased sampling and CMP regression models, we show that the proposed score matching is a tractable estimation method which produces consistent and asymptotically normality distributed estimates of regression parameters. Additionally, our proposed generalized score matching for discrete data works for univariate and multidimensional variables and inherits the principle of the original score matching for continuous data which is to compare the slopes of the log densities.

The rest of this article is organized as follows. Section \ref{sect::model setting} introduces a unified framework for regression-type models with continuous response and discrete response. We present two particular cases of the generalized score matching method under the unified framework and discuss the theoretical properties of the estimators in Section \ref{sect::sm}. A doctoral publication example and Monte Carlo studies are presented in Section \ref{sect::case study} and Section \ref{sect::numerical study}, respectively, which indicate that our estimators perform well. The proofs of the theorems and propositions can be found in the Appendix and all the other technical details are relegated to the supplementary material for this paper. The supplementary material covers generalized score matching for discrete INID multivariate response and the detailed derivatives of our proposed score matching objective functions.

\section{Regression-Type Models with an Intractable Normalizing Constant}\label{sect::model setting}
As a starting point, we present a general framework for probabilistic regression-type models with an intractable normalizing constant in this section. Suppose we have independent observations $\bm{y}_1,\cdots,\bm{y}_n\in \mathbb{R}^d$ from unknown continuous distributions $q(\bm{y}|\bm{x}_1),$ $\cdots,q(\bm{y}|\bm{x}_n)$, with associated covariates $\bm{x}_1,\cdots, \bm{x}_n$. Further assume that we have parametrized models $p(\cdot|\bm{x}_i,\bm{\theta})$ for $i=1,\cdots,n$, where $\bm{\theta}\in\mathbb{R}^p$ is a vector of unknown parameters. Therefore, $\bm{y}_1,\cdots,\bm{y}_n$ are independent but not necessarily identically distributed under the fixed design setting for these parametrized models. Throughout the entire paper all random variables are defined on an underlying probability space $\{\Omega,\mathcal{F},\mathbb{P}\}$. Thus, for example, the expectation of $\bm{y}$ may be expressed as $\mathbb{E}(\bm{y})=\int_\Omega \bm{y} d\mathbb{P}$.

Our unified framework for regression-type models with an intractable normalizing constant is given by
\begin{align}\label{model for INNID}
p(\bm{y}|\bm{x}_i,\bm{\theta})=\frac{1}{Z_i(\bm{\theta})}p_0(\bm{y})\widetilde{p}_1(\bm{y}|\bm{x}_i,\bm{\theta}),
\end{align}
where $p_0(\bm{y})$ is a known function of $\bm{y}$ and can be used to indicate additional information of the response, e.g. the range of the response, $\widetilde{p}_1(\bm{y}|\bm{x}_i,\bm{\theta})$ describes the relationship between the response and the covariates, and
\[
Z_i(\bm{\theta})\coloneqq Z(\bm{\theta},\bm{x}_i)=\int p_0(\bm{y})\widetilde{p}_1(\bm{y}|\bm{x}_i,\bm{\theta})d\bm{y},
\]
for $i=1,\cdots,n$ when $\bm{y}$ is continuous. When $\bm{y}$ is discrete, the normalizing constant, $Z_i(\bm{\theta})$, is given by
\[
Z_i(\bm{\theta})=\sum_{\bm{y}\in\mathcal{D}} p_0(\bm{y})\widetilde{p}_1(\bm{y}|\bm{x}_i,\bm{\theta}),
\]
where $\mathcal{D}$ is the domain of the observed data. For example, $\mathcal{D}=\{0,1\}^d$ for binary data and $\mathcal{D}=\{0,1,2,\cdots\}^d$ for count data. Suppose that the model (\ref{model for INNID}) is correctly specified, then the true distribution model of $\bm{y}$ satisfies $q(\bm{y}|\bm{x}_i)\propto p_0(\bm{y})\widetilde{p}_1(\bm{y}|\bm{x}_i,\bm{\theta}_0)$ for $i=1,\cdots,n$, where $\bm{\theta}_0$ denotes the true parameters.

To provide some insight into the general framework of the models defined above, we consider the following three examples.

\begin{exmp}[Truncated Gaussian regression]
If we set $p_0(\bm{y})=\mathbb{1}(\bm{y}>\bm{0})$ with $\mathbb{1}(\cdot)$ being an indicator function and $\bm{y}>\bm{0}$ indicates that each component of $\bm{y}$ is positive, and $\widetilde{p}_1(\bm{y}|\bm{x}_i,\bm{\theta})$ denotes an unnormalized Gaussian distribution with conditional mean $\bm{B}\bm{x}_i$, unknown precision matrix $\bm{\Lambda}$ and $\bm{\theta}=\left({\rm vec}^\top (\bm{B}),{\rm vech}^\top (\bm{\Lambda})\right)^\top$ where ${\rm vec}(\cdot)$ denotes the vectorization and ${\rm vech}(\cdot)$ denotes the half vectorization (\citealt{Magnus2019}). Then the probabilistic model (\ref{model for INNID}) gives a truncated Gaussian regression.
\end{exmp}

\begin{exmp}[Size-biased sampling]
If we treat $p_0(\bm{y})$ as a known weight function, then we get size-biased sampling. In other words, suppose that the original observation $\bm{y}_i$ has $\widetilde{p}_1(\bm{y}|\bm{x}_i,\bm{\theta})$ as the unnormalized probability mass or density function, and the probability of recording the observation $\bm{y}_i$ is $0\leq p_0(\bm{y}_i)\leq 1$, then the distribution function of the recorded observation, $\bm{y}_i^w$, follows the probabilistic model (\ref{model for INNID}) (\citealt{patil1978weighted}).
\end{exmp}

\begin{exmp}[Conway-Maxwell-Poisson regression]\label{exmp::CMP}
If we let $y\in\mathbb{N}_0$ where $\mathbb{N}_0$ is the set of non-negative integers, set $p_0(y)=1$ and 
\[
\widetilde{p}_1(y|\bm{x}_i,\bm{\theta})=\frac{\lambda_i^{y}}{(y!)^\nu},
\]
where $\bm{\theta}=(\bm{\beta}^\top,\nu)^\top$, $\nu\geq 0$ denotes the dispersion parameter and $\lambda_i=\exp (\bm{x}_i^\top\bm{\beta})$ is a generalization of the Poisson mean parameter, then the model (\ref{model for INNID}) gives a CMP regression model with $Z_i(\bm{\theta})=\sum_{s=0}^\infty \frac{\lambda_i^s}{(s!)^\nu}$. The CMP links together three common distributions as special cases: the geometric ($\nu=0$, and $\lambda_i<1$), Poisson ($\nu=1$), and Bernoulli ($\nu\to\infty$). Additionally, when $\nu\in[0,1)$, the CMP describes over-dispersed data relative to a Poisson distribution with the same mean, while when $\nu>1$, the CMP model is appropriate for under-dispersed data (\citealt{shmueli2005useful}).
\end{exmp}

\section{Score Matching and Its Generalization}\label{sect::sm}
In this section, we first present the original score matching for continuous data under the general framework given in Section \ref{sect::model setting} and then propose our generalized score matching approach for discrete data. Finally, we will derive our theoretical results for these estimations. Throughout the paper, for observed $\bm{y}_i=(y_{i1},\cdots,y_{id})^\top$ following a distribution $q(\bm{y}|\bm{x}_i)$ and for any function $l(\bm{y})$, we define the gradient of $l(\bm{y})$ as $\nabla l(\bm{y})=\frac{\partial l(\bm{y})}{\partial \bm{y}}=\left(\frac{\partial l(\bm{y})}{\partial y_1},\cdots,\frac{\partial l(\bm{y})}{\partial y_d}\right)^\top$, $\nabla l(\bm{y}_i)=\frac{\partial l(\bm{y})}{\partial \bm{y}}\big|_{\bm{y}=\bm{y}_i}$ and $\frac{\partial l(\bm{y}_i)}{\partial y_j}=\frac{\partial l(\bm{y})}{\partial y_j}\big|_{\bm{y}=\bm{y}_i}$ for $i=1,\cdots,n$ and $j=1,\cdots,d$.

\subsection{Score matching for continuous INID data}

Based on the Fisher divergence for two probability distributions $q(\bm{y}|\bm{x}_i)$ and $p(\bm{y}|\bm{x}_i,\bm{\theta})$ on $\mathbb{R}^d$, the basic score matching objective function $D_{\rm SM}(q_*,p_*)$ is given by
\begin{align}\label{sm population obj func}
D_{\rm SM}(q_*,p_*)=&\frac{1}{n}\sum_{i=1}^n\mathbb{E} \left( \|\nabla\log q(\bm{y}_i|\bm{x}_i)-\nabla\log p(\bm{y}_i|\bm{x}_i,\bm{\theta})\|^2 \right).
\end{align}
Note that $D_{\rm SM}(q_*,p_*)$ can be decomposed as
\[
D_{\rm SM}(q_*,p_*)=g(q_*)+d_{\rm SM}(q_*,p_*),
\]
where 
\begin{align*}
d_{\rm SM}(q_*,p_*)=\frac{1}{n}\sum_{i=1}^n\mathbb{E}  \left[2\sum_{j=1}^d\frac{\partial^2}{\partial^2 y_{j}}\log p(\bm{y}_i|\bm{x}_i,\bm{\theta})+\sum_{j=1}^d\left\{\frac{\partial}{\partial y_{j}}\log p(\bm{y}_i|\bm{x}_i,\bm{\theta})\right\}^2\right],
\end{align*}
and $g(q_*)$ is a constant depending on $q_*$ but not on $\bm{\theta}$. An empirical estimator of the population function, $d_{\rm SM}(q_*,p_*)$, is given by
\begin{align}\label{emprical sm cts}
\hat{d}_{\rm SM}(\bm{\theta})=\frac{1}{n}\sum_{i=1}^n \rho_{\rm SM}(\bm{y}_i|\bm{x}_i,\bm{\theta}),
\end{align}
where
\begin{align}\label{rho sm}
\rho_{\rm SM}(\bm{y}_i|\bm{x}_i,\bm{\theta})=&2\sum_{j=1}^d\frac{\partial^2}{\partial^2 y_{j}}\log p_0(\bm{y}_i)+2\sum_{j=1}^d\frac{\partial^2}{\partial^2 y_{j}}\log \widetilde{p}_1(\bm{y}_i|\bm{x}_i,\bm{\theta})\nonumber\\
&+\sum_{j=1}^d\left\{\frac{\partial}{\partial y_{j}}\log p_0(\bm{y}_i)+\frac{\partial}{\partial y_{j}}\log \widetilde{p}_1(\bm{y}_i|\bm{x}_i,\bm{\theta})\right\}^2.
\end{align}
The score matching estimator for $\bm{\theta}$ is then defined as
\begin{align}\label{sm est def}
\hat{\bm{\theta}}=\argmin_{\bm{\theta}}\hat{d}_{\rm SM}(\bm{\theta}).
\end{align}

\subsection{Generalized score matching for discrete INID data}
\cite{lyu2012interpretation} noticed that $\nabla \log q(\bm{y}|\bm{x}_i)=\frac{\nabla q(\bm{y}|\bm{x}_i)}{q(\bm{y}|\bm{x}_i)}$ where the gradient is a linear operator. \cite{lyu2012interpretation} then proposed a generalization of score matching in which a general linear operator $\mathcal{L}$ replaces the gradient operator. Thus, the generalized score matching objective function $D_{\mathcal{L}}(q_*,p_*)$ for $q_*=\prod_{i=1}^n q(\bm{y}|\bm{x}_i)$ and $p_*=\prod_{i=1}^n p(\bm{y}|\bm{x}_i,\bm{\theta})$ is given by
\begin{align}\label{original gsm loss}
D_{\mathcal{L}}(q_*,p_*)=&\frac{1}{n}\sum_{i=1}^n\mathbb{E} \left(\left\|\frac{\mathcal{L} q(\bm{y}|\bm{x}_i)}{q(\bm{y}|\bm{x}_i)}-\frac{\mathcal{L}  p(\bm{y}|\bm{x}_i,\bm{\theta})}{p(\bm{y}|\bm{x}_i,\bm{\theta})}\right\|^2\right),
\end{align}
where $\|\cdot\|$ denotes the Euclidean norm. 

To deal with the discrete data, \cite{lyu2012interpretation} studied a special case of the linear operator $\mathcal{L}$ in (\ref{original gsm loss}) which is called the marginalization operator $\mathcal{M}$. This special operator is defined to be $\mathcal{M}q(\bm{y}|\bm{x}_i)\coloneqq(\mathcal{M}_1q(\bm{y}|\bm{x}_i),\cdots,\mathcal{M}_dq(\bm{y}|\bm{x}_i))^\top$ with $\mathcal{M}_jq(\bm{y}|\bm{x}_i)\coloneqq\sum_{y_{j}}q(\bm{y}|\bm{x}_i)$ for $i=1,\cdots,n$ and $j=1,\cdots,d$. It is worth noting that when the dimension $d$ equals one, the proposed generalized score matching would still depend on the explicit computation of the intractable normalization constant. A similar problem occurs in ratio matching and this approach also fails to deal with univariate data. Therefore, we propose a novel linear operator which is called forward difference in this section.

For our proposed generalized score matching for regression-type models for discrete data, we first introduce the estimation method considering univariate responses in Section \ref{subsection::sm discrete}. Then the estimation approach is extended to the  multivariate response case in Section \ref{subsection::sm discrete multi} and discussed in more detail in Section \ref{sup sec::gsm multi discrete} of the supplementary material.

\subsubsection{Generalized score matching for discrete INID univariate response}\label{subsection::sm discrete}
Score matching for continuous data is designed to compare the slopes of the logarithms of the densities. However, the proposed variations of score matching for discrete data fail to explore such relationships between the true density and the parametrized density. We consider a novel linear operator $\mathcal{L}$ which is defined as $\mathcal{L}p(y|\bm{x}_i,\bm{\theta})\coloneqq p(y^+|\bm{x}_i,\bm{\theta})-p(y|\bm{x}_i,\bm{\theta})$ for $i=1,\cdots,n$, where $y^+=y+1$. If the range of $y$ is bounded, let $p(y^+|\bm{x}_i,\bm{\theta})=0$ when $y$ is located at the boundary. Similarly, let $y^-=y-1$ and $p(y^-|\bm{x}_i,\bm{\theta})=0$ at the boundary of the response domain. Note that this linear operator gives a discrete analogue of the slope of $p(y|\bm{x}_i,\bm{\theta})$ at the point $y$. After omitting the constant in $\mathcal{L}p(y|\bm{x}_i,\bm{\theta})/p(y|\bm{x}_i,\bm{\theta})=p(y+1|\bm{x}_i,\bm{\theta})/p(y|\bm{x}_i,\bm{\theta})-1$, we can see that the basic principle in our method is to force the ratio $p(y^+|\bm{x}_i,\bm{\theta})/p(y|\bm{x}_i,\bm{\theta})$ to be as close as possible to the corresponding ratio given by the data, i.e., $q(y^+|\bm{x}_i)/q(y|\bm{x}_i)$.

To avoid the zero denominator in the slopes, we consider the following transformation (\citealt{hyvarinen2007some}) of the slopes:
\[
t(u)=\frac{1}{1+u}.
\]
Now, any probability that is zero and leads to a ratio that is infinite will give a value of $t(\infty)=0$ for this transformation.

Therefore, we propose that the model is estimated by minimizing the following objective function for $q_*$ and $p_*$:
\begin{align}\label{discrete overall population function}
D_{\rm GSM}(q_*,p_*)=\frac{1}{n}\sum_{i=1}^n D_{\rm GSM}(q_i,p_i),
\end{align}
where
\begin{align}\label{discrete population function}
D_{\rm GSM}(q_i,p_i)=&\mathbb{E}  \Bigg\{\left[ t\left( \frac{p(y_i^+|\bm{x}_i,\bm{\theta})}{p(y_i|\bm{x}_i,\bm{\theta})} \right) - t\left( \frac{q(y_i^+|\bm{x}_i)}{q(y|\bm{x}_i)} \right) \right]^2 \nonumber\\
&+ \left[ t\left( \frac{p(y_i|\bm{x}_i,\bm{\theta})}{p(y_i^-|\bm{x}_i,\bm{\theta})}\right) - t\left( \frac{q(y_i|\bm{x}_i)}{q(y_i^-|\bm{x}_i)} \right) \right]^2 \Bigg\},
\end{align}
and $q_i$ and $p_i$ denote $q(y|\bm{x}_i)$ and $p(y|\bm{x}_i,\bm{\theta})$, respectively. The following theorem will show that (\ref{discrete population function}) is tractable. Its proof, as with all proofs in this article, is given in the Appendix.
\begin{theorem}\label{thm:: tractable obj funct for GSM}
The $i$-th component of the overall population objective function (\ref{discrete overall population function}), (\ref{discrete population function}), can be decomposed as
\begin{align*}
D_{\rm GSM}(q_i,p_i)=&\mathbb{E} \Bigg\{ t\left( \frac{p(y_i^+|\bm{x}_i,\bm{\theta})}{p(y_i|\bm{x}_i,\bm{\theta})} \right)^2+ t\left( \frac{p(y_i|\bm{x}_i,\bm{\theta})}{p(y_i^-|\bm{x}_i,\bm{\theta})}\right)^2-2t\left( \frac{p(y_i^+|\bm{x}_i,\bm{\theta})}{p(y_i|\bm{x}_i,\bm{\theta})} \right) \Bigg\}+C,
\end{align*}
where $C$ is a constant not depending on $\bm{\theta}$.
\end{theorem}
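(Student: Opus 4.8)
The plan is to expand the two squared differences in \eqref{discrete population function}, set aside the terms depending only on $q$ as the constant $C$, keep the purely model-dependent quadratic terms as they stand, and show that the two remaining cross terms telescope into the single tractable term $-2\,\mathbb{E}\,t\!\left(p(y_i^+|\bm{x}_i,\bm{\theta})/p(y_i|\bm{x}_i,\bm{\theta})\right)$. The argument runs in parallel to the derivation of ratio matching in \cite{hyvarinen2007some}, the new ingredient being the pairing of the ``forward'' ratio with the ``backward'' ratio through an index shift.

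To carry this out I would, within the proof, write $a(y)=t\!\left(p(y^+|\bm{x}_i,\bm{\theta})/p(y|\bm{x}_i,\bm{\theta})\right)$ and $\alpha(y)=t\!\left(q(y^+|\bm{x}_i)/q(y|\bm{x}_i)\right)$. Since $t(u)=1/(1+u)$, one has $a(y)=p(y|\bm{x}_i,\bm{\theta})/\{p(y|\bm{x}_i,\bm{\theta})+p(y^+|\bm{x}_i,\bm{\theta})\}$ and, in particular, $t\!\left(p(y_i|\bm{x}_i,\bm{\theta})/p(y_i^-|\bm{x}_i,\bm{\theta})\right)=a(y_i-1)$, with the analogous identities for $q$. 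Expanding the squares in \eqref{discrete population function} then writes $D_{\rm GSM}(q_i,p_i)$ as
\[
\mathbb{E}\,a(y_i)^2+\mathbb{E}\,a(y_i-1)^2\;-\;2\,\mathbb{E}\,a(y_i)\alpha(y_i)-2\,\mathbb{E}\,a(y_i-1)\alpha(y_i-1)\;+\;\mathbb{E}\,\alpha(y_i)^2+\mathbb{E}\,\alpha(y_i-1)^2 .
\]
The first two terms are precisely the $p$-quadratic terms in the statement, the last two depend only on $q$ and will become $C$, and only the two cross terms need further work. Since $|t|\le 1$ and $\sum_{y}q(y|\bm{x}_i)=1$, all of these sums converge absolutely, so the rearrangement below is legitimate and no separate regularity condition is needed.

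For the cross terms I would express the expectations as sums over the data domain $\mathcal{D}$ against $q(\cdot|\bm{x}_i)$ and reindex the second sum by $z=y-1$; the boundary convention (that $p$ and $q$, hence $a$ and $\alpha$, vanish once their argument leaves $\mathcal{D}$) ensures the lower-boundary value of $y$ contributes nothing, so $\mathbb{E}\,a(y_i-1)\alpha(y_i-1)=\sum_{z}q(z+1|\bm{x}_i)\,a(z)\alpha(z)$. Adding this to $\mathbb{E}\,a(y_i)\alpha(y_i)=\sum_{y}q(y|\bm{x}_i)\,a(y)\alpha(y)$ gives $\sum_{y}\{q(y|\bm{x}_i)+q(y+1|\bm{x}_i)\}\,a(y)\alpha(y)$, and the elementary identity $\{q(y|\bm{x}_i)+q(y+1|\bm{x}_i)\}\,\alpha(y)=q(y|\bm{x}_i)$ (which holds for every $y$ with $q(y|\bm{x}_i)+q(y+1|\bm{x}_i)>0$ by the form of $t$, the remaining $y$ contributing nothing) collapses the sum to $\sum_{y}q(y|\bm{x}_i)\,a(y)=\mathbb{E}\,a(y_i)$. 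Hence the two cross terms together equal $-2\,\mathbb{E}\,a(y_i)=-2\,\mathbb{E}\,t\!\left(p(y_i^+|\bm{x}_i,\bm{\theta})/p(y_i|\bm{x}_i,\bm{\theta})\right)$, and setting $C=\mathbb{E}\,\alpha(y_i)^2+\mathbb{E}\,\alpha(y_i-1)^2$ finishes the proof.

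I expect the only real difficulty to be careful bookkeeping rather than anything conceptual: correctly matching $t(p(y^+)/p(y))$ with $t(p(y)/p(y^-))$ via the shift $a(\cdot-1)$, and verifying that the boundary terms genuinely drop out both in the bounded case (e.g.\ binary data, where $y^+$ or $y^-$ can leave $\{0,1\}$) and in the unbounded count case under the convention $t(\infty)=0$, so that the reindexing introduces no spurious contributions. Once the identity $\{q(y)+q(y+1)\}\,t\!\left(q(y+1)/q(y)\right)=q(y)$ is isolated, everything else is immediate.
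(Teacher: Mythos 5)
Your proof is correct and follows essentially the same route as the paper's: expand the squares, absorb the $q$-only terms into $C$, reindex the backward cross term onto the forward one, and use the identity $\{q(y|\bm{x}_i)+q(y^+|\bm{x}_i)\}\,t\bigl(q(y^+|\bm{x}_i)/q(y|\bm{x}_i)\bigr)=q(y|\bm{x}_i)$ to collapse the combined cross term to $\mathbb{E}\,t\bigl(p(y_i^+|\bm{x}_i,\bm{\theta})/p(y_i|\bm{x}_i,\bm{\theta})\bigr)$. Your explicit handling of the boundary convention and absolute convergence is slightly more careful than the paper's (which simply assumes $q$ and $p$ nonzero on $\mathcal{D}$), but the argument is the same.
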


It is worth noting that Theorem \ref{thm:: tractable obj funct for GSM} indicates that $D_{\rm GSM}(q_*,p_*)$ can be decomposed as
\[
D_{\rm GSM}(q_*,p_*)=g(q_*)+d_{\rm GSM}(q_*,p_*),
\]
where
\[
d_{\rm GSM}(q_*,p_*)=\frac{1}{n}\sum_{i=1}^n \mathbb{E} \Bigg\{ t\left( \frac{p(y_i^+|\bm{x}_i,\bm{\theta})}{p(y_i|\bm{x}_i,\bm{\theta})} \right)^2 + t\left( \frac{p(y_i|\bm{x}_i,\bm{\theta})}{p(y_i^-|\bm{x}_i,\bm{\theta})}\right)^2-2t\left( \frac{p(y_i^+|\bm{x}_i,\bm{\theta})}{p(y_i|\bm{x}_i,\bm{\theta})} \right) \Bigg\},
\]
and $g(q_*)$ is a constant depending on $q_*$ but not on $\bm{\theta}$. An empirical estimator of the population function, $d_{\rm GSM}(q_*,p_*)$, is given by
\begin{align}\label{discrete empirical function}
\hat{d}_{\rm GSM}(\bm{\theta})=\frac{1}{n}\sum_{i=1}^n \rho_{\rm GSM}(y_i|\bm{x}_i,\bm{\theta}),
\end{align}
where
\begin{align}\label{rho gsm}
\rho_{\rm GSM}(y_i|\bm{x}_i,\bm{\theta})= t\left( \frac{p(y_i^+|\bm{x}_i,\bm{\theta})}{p(y_i|\bm{x}_i,\bm{\theta})} \right)^2 + t\left( \frac{p(y_i|\bm{x}_i,\bm{\theta})}{p(y_i^-|\bm{x}_i,\bm{\theta})}\right)^2-2t\left( \frac{p(y_i^+|\bm{x}_i,\bm{\theta})}{p(y_i|\bm{x}_i,\bm{\theta})} \right).
\end{align}
The generalized score matching estimator for $\bm{\theta}$ is then defined as
\begin{align}\label{gsm est def}
\hat{\bm{\theta}}=\argmin_{\bm{\theta}}\hat{d}_{\rm GSM}(\bm{\theta}).
\end{align}

As for the local consistency, we have a result which is analogous to the consistency theorem in \cite{hyvarinen2007some}. This is given by the following theorem.
\begin{theorem}\label{thm:: GSM unique solution}
Assume that the model $p(y|\bm{x}_i,\bm{\theta})$ is correct, that is, there exits a $\bm{\theta}_0$ such that $p(y|\bm{x}_i,\bm{\theta}_0)=q(y|\bm{x}_i)$ for $i=1,\cdots,n$ and further suppose that the model is identifiable, i.e. for each $\bm{\theta}\neq\bm{\theta}_0$, there exists a set of $\bm{y}$ of positive probability under $p(\bm{y}|\bm{x}_i,\bm{\theta}_0)$ such that $p(\bm{y}|\bm{x}_i,\bm{\theta})\neq p(\bm{y}|\bm{x}_i,\bm{\theta}_0)$. Then, $D_{\rm GSM}(q_*,p_*)=0$ if and only if $\bm{\theta}=\bm{\theta}_0$, where $D_{\rm GSM}$ is defined in (\ref{discrete overall population function}).
\end{theorem}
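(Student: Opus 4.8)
The plan is to establish the two implications separately. The reverse direction is immediate: if $\bm{\theta}=\bm{\theta}_0$, then correctness of the model gives $p(y\,|\,\bm{x}_i,\bm{\theta}_0)=q(y\,|\,\bm{x}_i)$ for every $i$ and every $y$, so the forward ratio $p(y^+\,|\,\bm{x}_i,\bm{\theta})/p(y\,|\,\bm{x}_i,\bm{\theta})$ and the backward ratio $p(y\,|\,\bm{x}_i,\bm{\theta})/p(y^-\,|\,\bm{x}_i,\bm{\theta})$ coincide with the corresponding ratios of $q$; every bracketed term in (\ref{discrete population function}) then vanishes, so $D_{\rm GSM}(q_i,p_i)=0$ for each $i$, and hence $D_{\rm GSM}(q_*,p_*)=0$ by (\ref{discrete overall population function}).

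For the forward direction, suppose $D_{\rm GSM}(q_*,p_*)=0$. Each $D_{\rm GSM}(q_i,p_i)$ is the expectation of a sum of two squares, hence non-negative, so the average being zero forces $D_{\rm GSM}(q_i,p_i)=0$ for every $i$. Fixing $i$, the integrand is a non-negative function of $y_i$ with zero expectation under $q(\cdot\,|\,\bm{x}_i)$, so it must vanish at every $y$ in the support of $q(\cdot\,|\,\bm{x}_i)$; that is,
\[
t\!\left(\frac{p(y^+\,|\,\bm{x}_i,\bm{\theta})}{p(y\,|\,\bm{x}_i,\bm{\theta})}\right)=t\!\left(\frac{q(y^+\,|\,\bm{x}_i)}{q(y\,|\,\bm{x}_i)}\right)\quad\text{and}\quad t\!\left(\frac{p(y\,|\,\bm{x}_i,\bm{\theta})}{p(y^-\,|\,\bm{x}_i,\bm{\theta})}\right)=t\!\left(\frac{q(y\,|\,\bm{x}_i)}{q(y^-\,|\,\bm{x}_i)}\right)
\]
for all such $y$. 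Since $t(u)=1/(1+u)$ is strictly decreasing, hence injective, on $[0,\infty]$ (with $t(\infty)=0$), the arguments must agree, so $p(y^+\,|\,\bm{x}_i,\bm{\theta})/p(y\,|\,\bm{x}_i,\bm{\theta})=q(y^+\,|\,\bm{x}_i)/q(y\,|\,\bm{x}_i)$, and likewise for the backward ratio.

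Next I would propagate these ratio identities along $\mathcal{D}$: fixing a reference point $y_{i,0}$ in the support of $q(\cdot\,|\,\bm{x}_i)$ and telescoping the forward identity upward and the backward identity downward yields $p(y\,|\,\bm{x}_i,\bm{\theta})=c_i\,q(y\,|\,\bm{x}_i)$, where $c_i=p(y_{i,0}\,|\,\bm{x}_i,\bm{\theta})/q(y_{i,0}\,|\,\bm{x}_i)$, for every $y$ in the support, while the boundary conventions — $q(y^\pm\,|\,\bm{x}_i)=0$ at an edge forcing the matching $p$-value to zero — make $p(\cdot\,|\,\bm{x}_i,\bm{\theta})$ and $q(\cdot\,|\,\bm{x}_i)$ agree (both zero) off the support. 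Summing over $y\in\mathcal{D}$ and using that both are probability mass functions gives $c_i=1$, so $p(y\,|\,\bm{x}_i,\bm{\theta})=q(y\,|\,\bm{x}_i)=p(y\,|\,\bm{x}_i,\bm{\theta}_0)$ for all $i$ and $y$. The identifiability hypothesis — that for each $\bm{\theta}\neq\bm{\theta}_0$ there is a set of positive $p(\cdot\,|\,\bm{x}_i,\bm{\theta}_0)$-probability on which $p(\cdot\,|\,\bm{x}_i,\bm{\theta})$ differs from $p(\cdot\,|\,\bm{x}_i,\bm{\theta}_0)$ — then forces $\bm{\theta}=\bm{\theta}_0$.

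The step I expect to be most delicate is the bookkeeping in the recursion. One must check that the support of $q(\cdot\,|\,\bm{x}_i)$ chains together as an interval of integers, so that the telescoped product of ratios is well defined and never divides by a probability already shown to be zero (equivalently, that $p(\cdot\,|\,\bm{x}_i,\bm{\theta})>0$ on that support); one must treat the extended-real arguments of $t$ (the value $\infty$ arising when a denominator probability vanishes) consistently, so that the injectivity argument remains valid at the boundary; and one should dispatch degenerate cases such as a single-point support or gaps in the support with an explicit remark, since the univariate forward and backward differences alone need not determine $p$ across a disconnected support. For the models of primary interest — in particular CMP regression, where the support is all of $\mathbb{N}_0$ — these complications do not arise and the argument above goes through directly.
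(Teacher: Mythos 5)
Your proposal is correct and follows essentially the same route as the paper's proof: deduce equality of the forward and backward ratios from the vanishing of the non-negative integrand, telescope the resulting identity $q(y|\bm{x}_i)/p(y|\bm{x}_i,\bm{\theta})=q(y^+|\bm{x}_i)/p(y^+|\bm{x}_i,\bm{\theta})$ to obtain $q=c\,p$, use normalization to force $c=1$, and invoke identifiability. If anything, you are more explicit than the paper about the injectivity of $t$ and the support/boundary bookkeeping, which the paper dispatches by simply assuming $q$ and $p$ are positive on $\mathcal{D}$.
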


\subsubsection{Generalized score matching for discrete INID multivariate response}\label{subsection::sm discrete multi}
To extend our proposed generalized score matching to multivariate cases, we consider a linear operator $\mathcal{L}$ which is defined as
\[
\mathcal{L}p(\bm{y}|\bm{x}_i,\bm{\theta})=\begin{pmatrix}
\vdots\\
\mathcal{L}_j p(\bm{y}|\bm{x}_i,\bm{\theta})\\
\vdots
\end{pmatrix}=\begin{pmatrix}
\vdots\\
p(\bm{y}^{(j_{+})}|\bm{x}_i,\bm{\theta})-p(\bm{y}|\bm{x}_i,\bm{\theta})\\
\vdots
\end{pmatrix},
\]
where $\bm{y}^{(j_{+})}=(y_{1},\cdots,y_{j}+1,\cdots,y_{d})^\top$ for $j=1,\cdots,d$. If the range of $\bm{y}$ is bounded, let $p(\bm{y}^{(j_{+})}|\bm{x}_i,\bm{\theta})=0$ when $\bm{y}$ is located at the boundary. Similarly, let $\bm{y}^{(j_{-})}=(y_{1},\cdots,y_{j}-1,\cdots,y_{d})^\top$ and $p(\bm{y}^{(j_{-})}|\bm{x}_i,\bm{\theta})=0$ at the boundary of the response domain. After using the same transformation in the univariate case, the $i$-th component of the overall population objective function is given by
\begin{align*}
D_{\rm GSM}(q_i,p_i)=&\mathbb{E} \Bigg(\sum_{j=1}^d \Bigg\{\left[ t\left( \frac{p(\bm{y}_i^{(j_{+})}|\bm{x}_i,\bm{\theta})}{p(\bm{y}_i|\bm{x}_i,\bm{\theta})} \right) - t\left( \frac{q(\bm{y}_i^{(j_{+})}|\bm{x}_i)}{q(\bm{y}_i|\bm{x}_i)} \right) \right]^2 \nonumber\\
&+ \left[ t\left( \frac{p(\bm{y}_i|\bm{x}_i,\bm{\theta})}{p(\bm{y}_i^{(j_{-})}|\bm{x}_i,\bm{\theta})}\right) - t\left( \frac{q(\bm{y}_i|\bm{x}_i)}{q(\bm{y}_i^{(j_{-})}|\bm{x}_i)} \right) \right]^2 \Bigg\}\Bigg).
\end{align*}
In the setting of discrete data with multivariate response, the proposed generalized score matching estimation has analogous theoretical properties which is studied in the univariate case. Detailed discussion of these properties can be found in Section \ref{sup sec::gsm multi discrete} of the supplementary material.

\subsection{Theoretical properties of generalized score matching}\label{sect:: thm property}
Note that our proposed generalized score matching estimators given by (\ref{sm est def}) and (\ref{gsm est def}) are M-estimators. Thus we can apply similar techniques to those used in deriving theoretical results of M-estimators (\citealt{van2000asymptotic}). 

Before discussing the limiting behaviors of our proposed score matching estimators for regression-type models, we state the following conditions for deriving asymptotics of our score matching estimator. For the sake of simplification, we present conditions for generalized score matching for discrete data with multivariate response. For univariate response, we can change the bold notation $\bm{y}_i$ to $y_i$. In the following, denote the parameter space for $\bm{\theta}$ by $\Theta$ and let $\mathcal{I}_n(\bm{\theta})=-\mathbb{E}\left( \frac{\partial^2 \hat{d}(\bm{\theta})}{\partial \bm{\theta} \partial \bm{\theta}^\top}\right)$ and $\mathcal{J}_n(\bm{\theta})=n{\rm Cov}\left(\frac{\partial \hat{d}(\bm{\theta})}{\partial \bm{\theta}}\right)-\mathcal{I}_n(\bm{\theta})$ in each case $\hat{d}=\hat{d}_{\rm SM}$ and $\hat{d}=\hat{d}_{\rm GSM}$ where $\hat{d}_{\rm SM}$ and $\hat{d}_{GSM}$ are defined in (\ref{emprical sm cts}) and (\ref{discrete empirical function}), respectively. Additionally, we consider the corresponding cases $\rho=\rho_{\rm SM}$ and $\rho=\rho_{\rm GSM}$ where $\rho_{\rm SM}$ and $\rho_{\rm GSM}$ are given in (\ref{rho sm}) and (\ref{rho gsm}), respectively.

\begin{description}
\item[(C1)] There exists an open subset $\mathcal{B}$ of $\Theta$ that contains the true parameter point $\bm{\theta}_0$ such that for almost all $(\bm{y}_i,\bm{x}_i)$, $\rho(\bm{y}_i|\bm{x}_i,\bm{\theta})$ admits the first derivatives $\frac{\partial \rho(\bm{y}_i|\bm{x}_i,\bm{\theta})}{\partial \bm{\theta}}$ and $\mathbb{E}( \rho(\bm{y}_i|\bm{x}_i,\bm{\theta}))<\infty$ for all $\bm{\theta}\in\mathcal{B}$. Furthermore, for $i=1,\cdots,n$,
\begin{align*}
\sup_{\bm{\theta}\in\mathcal{B}}\max_{k}\left| \frac{\partial \rho(\bm{y}_i|\bm{x}_i,\bm{\theta})}{\partial \theta_{k}} \right| &\leq M_1(\bm{y}_i),
\end{align*}
where $m_1=\mathbb{E}[|M_1(\bm{y}_i)|]<\infty$;

\item[(C2)] $\mathcal{I}_n(\bm{\theta}_0)\to\mathcal{I}(\bm{\theta}_0)$ and $\mathcal{J}_n(\bm{\theta}_0)\to\mathcal{J}(\bm{\theta}_0)$ as $n\to\infty$. We assume that $\mathcal{I}(\bm{\theta}_0)$ and $\mathcal{I}(\bm{\theta}_0)+\mathcal{J}(\bm{\theta}_0)$ are positive definite;

\item[(C3)] There exists a $\delta>0$ such that $\widetilde{M}(\bm{x}_i,\bm{\theta}_0)\coloneqq \mathbb{E}\left[\left\|\frac{\partial \rho(\bm{y}_i|\bm{x}_i,\bm{\theta}_0)}{\partial \bm{\theta}}\right\|^{2+\delta}\right]$ satisfies
\[
\lim_{n\to \infty}n^{-1}\sum_{i=1}^n \widetilde{M}(\bm{x}_i,\bm{\theta}_0)=0;
\]

\item[(C4)] There exists an open subset $\mathcal{B}$ of $\Theta$ that contains the true parameter point $\bm{\theta}_0$ such that for almost all $(\bm{y}_i,\bm{x}_i)$, $\rho(\bm{y}_i|\bm{x}_i,\bm{\theta})$ admits all third derivatives $\frac{\partial^3 \rho(\bm{y}_i|\bm{x}_i,\bm{\theta})}{\partial \theta_{j_1}\partial \theta_{j_2}\partial \theta_{j_3}}$ for all $\bm{\theta}\in\mathcal{B}$. Furthermore, for $i=1,\cdots,n$, 
\begin{align*}
\sup_{\bm{\theta}\in\mathcal{B}} \max_{j_1,j_2,j_3}\left| \frac{\partial^3 \rho(\bm{y}_i|\bm{x}_i,\bm{\theta})}{\partial \theta_{j_1}\partial \theta_{j_2}\partial \theta_{j_3}} \right| &\leq M_2(\bm{y}_i),
\end{align*}
where $m_2=\mathbb{E}[|M_2(\bm{y}_i)|^{2+\delta}]<\infty$ for some $\delta>0$.

\end{description}

Condition (C1) comes from the differentiation lemma in \cite{klenke2013probability} which ensures the interchange of integration and differentiation in an open neighborhood around $\bm{\theta}_0$. Condition (C2) is a standard condition for establishing the convergence of the Fisher information matrix and the covariance of the score functions. Condition (C3) is a Lyapounov condition which, in conjunction with Condition (C4), is commonly used in asymptotics for MLEs under the INID setting (\citealt{lee1998analysis,van2000asymptotic}).

Our first result in this section shows that the expectation of the score vector is equal to $\bm{0}$ when $\bm{\theta}=\bm{\theta}_0$. Note that $\bm{\theta}_0$ is the population value of $\bm{\theta}$ assuming that the parametric model is correct. 

\begin{proposition}\label{thm:: sm score function expectation}
Suppose that Condition (C1) in Section \ref{sect:: thm property} holds. Then
\[
\mathbb{E}\left(\frac{\partial \hat{d}(\bm{\theta}_0)}{\partial \bm{\theta}}\Bigg|_{\bm{\theta}=\bm{\theta}_0}\right)=\bm{0},
\]
in each case $\hat{d}=\hat{d}_{\rm SM}$ and $\hat{d}=\hat{d}_{\rm GSM}$ where $\hat{d}_{\rm SM}$ and $\hat{d}_{GSM}$ are defined in (\ref{emprical sm cts}) and (\ref{discrete empirical function}), respectively, $\bm{\theta}_0$ denotes the population parameter vector and the expectation is taken with respect to the population distribution.
\end{proposition}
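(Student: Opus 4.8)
The plan is to prove the two cases separately, since $\hat d_{\rm SM}$ and $\hat d_{\rm GSM}$ have different structure, but both proofs rest on the same underlying mechanism: the empirical objective $\hat d$ is an average of per-observation contributions $\rho(\bm y_i\mid\bm x_i,\bm\theta)$, and the population objective $d(\bm\theta)=\frac1n\sum_i\mathbb E[\rho(\bm y_i\mid\bm x_i,\bm\theta)]$ is minimized at $\bm\theta_0$ because the corresponding \emph{divergence} $D(q_*,p_*)=g(q_*)+d(\bm\theta)$ is a nonnegative quantity that vanishes at $\bm\theta_0$ (this is exactly the content of Theorem~\ref{thm:: GSM unique solution} in the discrete case, and the Fisher-divergence nonnegativity in the continuous case). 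So $\bm\theta_0$ is an interior minimizer of the smooth function $d(\bm\theta)$, and Condition (C1) lets us interchange expectation and the $\bm\theta$-derivative, giving
\[
\mathbb E\!\left(\frac{\partial\hat d(\bm\theta)}{\partial\bm\theta}\Big|_{\bm\theta=\bm\theta_0}\right)
=\frac{\partial}{\partial\bm\theta}\,\mathbb E\big(\hat d(\bm\theta)\big)\Big|_{\bm\theta=\bm\theta_0}
=\frac{\partial\,d(\bm\theta)}{\partial\bm\theta}\Big|_{\bm\theta=\bm\theta_0}=\bm 0 .
\]

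First I would record the interchange step carefully. Condition (C1) supplies an integrable dominating function $M_1(\bm y_i)$ for $\sup_{\bm\theta\in\mathcal B}\max_k|\partial\rho/\partial\theta_k|$, which is precisely the hypothesis of the differentiation lemma of \cite{klenke2013probability}; applying it termwise to each of the finitely many summands in $\hat d(\bm\theta)=\frac1n\sum_i\rho(\bm y_i\mid\bm x_i,\bm\theta)$ justifies $\partial_{\bm\theta}\mathbb E[\rho(\bm y_i\mid\bm x_i,\bm\theta)]=\mathbb E[\partial_{\bm\theta}\rho(\bm y_i\mid\bm x_i,\bm\theta)]$ on $\mathcal B$. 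Then it remains to show $\partial_{\bm\theta}\,d(\bm\theta)|_{\bm\theta_0}=\bm 0$.

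For this I would argue that $d(\bm\theta)$ attains a local minimum at the interior point $\bm\theta_0$ and is differentiable there, so its gradient vanishes by Fermat's rule. In the discrete (GSM) case, Theorem~\ref{thm:: GSM unique solution} gives $D_{\rm GSM}(q_*,p_*)\ge 0$ with equality iff $\bm\theta=\bm\theta_0$; since $D_{\rm GSM}(q_*,p_*)=g(q_*)+d_{\rm GSM}(\bm\theta)$ with $g(q_*)$ free of $\bm\theta$, the function $d_{\rm GSM}(\bm\theta)$ has a strict global — hence local — minimum at $\bm\theta_0$. In the continuous (SM) case, the analogous fact is that $D_{\rm SM}(q_*,p_*)=\frac1n\sum_i\mathbb E\|\nabla\log q(\bm y_i\mid\bm x_i)-\nabla\log p(\bm y_i\mid\bm x_i,\bm\theta)\|^2\ge 0$ with value $0$ at $\bm\theta_0$ under correct specification, and $D_{\rm SM}=g(q_*)+d_{\rm SM}(\bm\theta)$, so again $d_{\rm SM}$ is minimized at $\bm\theta_0$. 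Combining the interchange with Fermat's rule yields the claim. Alternatively — and this is the cleaner route if one does not want to invoke the divergence being a true minimum — one can verify $\mathbb E[\partial_{\bm\theta}\rho(\bm y_i\mid\bm x_i,\bm\theta_0)]=\bm 0$ directly: using integration by parts (continuous case) or the summation-by-parts/telescoping identity behind Theorem~\ref{thm:: tractable obj funct for GSM} (discrete case), one rewrites $\mathbb E_{q_i}[\partial_{\bm\theta}\rho(\bm y_i\mid\bm x_i,\bm\theta)]$ as $\partial_{\bm\theta}$ of an expression that, at $\bm\theta_0$, equals a $\bm\theta$-free quantity plus $D(q_i,p_i)$ whose derivative vanishes at its minimum.

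The main obstacle is not any single hard estimate but making the logical dependence airtight: one must ensure that $d(\bm\theta)$ is genuinely differentiable at $\bm\theta_0$ (supplied by (C1) via the differentiation lemma), that $\bm\theta_0$ is an \emph{interior} point of the domain on which $d$ is minimized (supplied by the open set $\mathcal B$ in (C1) together with the ``iff'' in Theorem~\ref{thm:: GSM unique solution}), and — in the continuous case — that the boundary terms arising in the integration-by-parts identity underlying the decomposition $D_{\rm SM}=g(q_*)+d_{\rm SM}$ vanish, which is where the factor $p_0(\bm y)$ (e.g. the indicator enforcing the support) does its work. Once those three points are pinned down, the conclusion $\mathbb E[\partial_{\bm\theta}\hat d(\bm\theta_0)]=\bm 0$ is immediate from Fermat's rule applied to the interior minimizer.
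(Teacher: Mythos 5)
Your proposal is correct and follows essentially the same route as the paper: use Condition (C1) to interchange $\mathbb{E}$ and $\partial/\partial\bm{\theta}$, identify $\mathbb{E}[\hat d(\bm{\theta})]$ with the divergence $D(q_*,p_*)$ up to a $\bm{\theta}$-free constant, and conclude that its gradient vanishes at $\bm{\theta}_0$ because under correct specification the divergence (a sum of expected squared differences) is zero there. The only cosmetic difference is that the paper finishes by differentiating the squared differences explicitly and exhibiting the factor $\nabla_{\bm y}\log q-\nabla_{\bm y}\log p$ (resp.\ $t(p^{+}/p)-t(q^{+}/q)$) that vanishes at $\bm{\theta}_0$, whereas you invoke Fermat's rule at the interior global minimizer --- the same first-order argument packaged differently.
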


The above result gives 
\begin{align*}
{\rm Cov}\left(\frac{\partial \hat{d}(\bm{\theta}_0)}{\partial \bm{\theta}}\right)=\frac{1}{n^2}\sum_{i=1}^n \mathbb{E}\left\{\frac{\partial \rho(\bm{y}_i|\bm{x}_i,\bm{\theta}_0)}{\partial \bm{\theta}}\frac{\partial \rho(\bm{y}_i|\bm{x}_i,\bm{\theta}_0)}{\partial \bm{\theta}^\top}\right\},
\end{align*}
and $\mathcal{J}_n(\bm{\theta})$ can be represented as
\begin{align*}
\mathcal{J}_n(\bm{\theta})=&\frac{1}{n}\sum_{i=1}^n\mathbb{E}\left\{\frac{\partial \rho(\bm{y}_i|\bm{x}_i,\bm{\theta})}{\partial \bm{\theta}}\frac{\partial \rho(\bm{y}_i|\bm{x}_i,\bm{\theta})}{\partial \bm{\theta}^\top}+\frac{\partial^2 \rho(\bm{y}_i|\bm{x}_i,\bm{\theta})}{\partial \bm{\theta} \partial \bm{\theta}^\top}\right\}.
\end{align*}

\begin{proposition}\label{prop::clt score function}
Suppose that Conditions (C1)-(C4) in Section \ref{sect:: thm property} hold. Then
\[
\sqrt{n}\frac{\partial \hat{d}(\bm{\theta}_0)}{\partial \bm{\theta}}\stackrel{d}{\longrightarrow}N\left(\bm{0},\mathcal{I}(\bm{\theta}_0)+\mathcal{J}(\bm{\theta}_0)\right),
\]
and
\[
-\frac{\partial^2 \hat{d}(\bm{\theta}_0)}{\partial \bm{\theta} \partial \bm{\theta}^\top}\stackrel{p}{\longrightarrow}\mathcal{I}(\bm{\theta}_0),
\]
in each case $\hat{d}=\hat{d}_{\rm SM}$ and $\hat{d}=\hat{d}_{\rm GSM}$ where $\hat{d}_{\rm SM}$ and $\hat{d}_{GSM}$ are defined in (\ref{emprical sm cts}) and (\ref{discrete empirical function}), respectively, $\mathcal{I}(\bm{\theta}_0)$ and $\mathcal{J}(\bm{\theta}_0)$ which are defined in Condition (C2) in Section \ref{sect:: thm property} are the limiting of matrices $\mathcal{I}_n(\bm{\theta}_0)$ and $\mathcal{J}_n(\bm{\theta}_0)$, respectively.
\end{proposition}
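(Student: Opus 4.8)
The plan is to read both limits off the representation $\hat d(\bm{\theta})=n^{-1}\sum_{i=1}^n\rho(\bm{y}_i|\bm{x}_i,\bm{\theta})$: the gradient at $\bm{\theta}_0$ is a normalized sum and the Hessian a plain average of independent but non-identically distributed terms, so the first display is a central limit theorem and the second a weak law of large numbers for triangular arrays, in the spirit of INID M-estimation theory (\citealt{van2000asymptotic}). First I would set $\bm{Z}_i=\partial\rho(\bm{y}_i|\bm{x}_i,\bm{\theta}_0)/\partial\bm{\theta}$ and record, via Proposition~\ref{thm:: sm score function expectation} and the covariance identity stated immediately after it, that the $\bm{Z}_i$ are independent with $\mathbb{E}(\bm{Z}_i)=\bm{0}$, that $n^{-1}\sum_{i=1}^n\mathbb{E}(\bm{Z}_i\bm{Z}_i^\top)=\mathcal{I}_n(\bm{\theta}_0)+\mathcal{J}_n(\bm{\theta}_0)\to\mathcal{I}(\bm{\theta}_0)+\mathcal{J}(\bm{\theta}_0)$ by Condition (C2), and that $\sqrt{n}\,\partial\hat d(\bm{\theta}_0)/\partial\bm{\theta}=n^{-1/2}\sum_{i=1}^n\bm{Z}_i$ has covariance equal to that same average.

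For the CLT I would use the Cram\'er--Wold device together with the Lyapounov CLT for triangular arrays. Fix $\bm{a}\in\mathbb{R}^p$ with $\bm{a}\neq\bm{0}$ and put $W_{ni}=n^{-1/2}\bm{a}^\top\bm{Z}_i$; these are row-independent and centered with $s_n^2:=\sum_{i=1}^n{\rm Var}(W_{ni})=\bm{a}^\top(\mathcal{I}_n(\bm{\theta}_0)+\mathcal{J}_n(\bm{\theta}_0))\bm{a}\to\bm{a}^\top(\mathcal{I}(\bm{\theta}_0)+\mathcal{J}(\bm{\theta}_0))\bm{a}>0$, the positivity coming from Condition (C2). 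The Lyapounov ratio is controlled by Condition (C3): since $\mathbb{E}|\bm{a}^\top\bm{Z}_i|^{2+\delta}\le\|\bm{a}\|^{2+\delta}\widetilde{M}(\bm{x}_i,\bm{\theta}_0)$,
\[
\sum_{i=1}^n\mathbb{E}|W_{ni}|^{2+\delta}\le\|\bm{a}\|^{2+\delta}\,n^{-\delta/2}\Big(n^{-1}\sum_{i=1}^n\widetilde{M}(\bm{x}_i,\bm{\theta}_0)\Big)\longrightarrow 0,
\]
while $s_n^{2+\delta}$ stays bounded away from $0$. Hence $s_n^{-1}\sum_{i=1}^n W_{ni}\stackrel{d}{\longrightarrow}N(0,1)$, and as $s_n$ converges to a positive limit, $\bm{a}^\top\sqrt{n}\,\partial\hat d(\bm{\theta}_0)/\partial\bm{\theta}\stackrel{d}{\longrightarrow}N(0,\bm{a}^\top(\mathcal{I}(\bm{\theta}_0)+\mathcal{J}(\bm{\theta}_0))\bm{a})$; since $\bm{a}$ was arbitrary, Cram\'er--Wold yields the first assertion. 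Nothing in this argument distinguishes $\hat d_{\rm SM}$ from $\hat d_{\rm GSM}$ — it uses only the INID structure and Conditions (C1)--(C3).

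For the Hessian, write $\bm{H}_i:=\partial^2\rho(\bm{y}_i|\bm{x}_i,\bm{\theta}_0)/\partial\bm{\theta}\partial\bm{\theta}^\top$, so that $-\partial^2\hat d(\bm{\theta}_0)/\partial\bm{\theta}\partial\bm{\theta}^\top=-n^{-1}\sum_{i=1}^n\bm{H}_i$ has mean $\mathcal{I}_n(\bm{\theta}_0)\to\mathcal{I}(\bm{\theta}_0)$ by Condition (C2); it therefore suffices to show $n^{-1}\sum_{i=1}^n(\bm{H}_i-\mathbb{E}\bm{H}_i)\stackrel{p}{\longrightarrow}\bm{0}$, which entrywise is a weak law of large numbers for an independent centered triangular array. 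I would run this through an $L^{1+\gamma}$ estimate (von Bahr--Esseen, $1+\gamma\le 2$): the task reduces to a uniform-in-$i$ bound on $\mathbb{E}|(\bm{H}_i)_{jk}|^{1+\gamma}$, which I would obtain by a Landau--Kolmogorov interpolation inequality on a ball contained in $\mathcal{B}$, dominating each Hessian entry by the first-derivative envelope $M_1(\bm{y}_i)$ of Condition (C1) and the third-derivative envelope $M_2(\bm{y}_i)$ of Condition (C4), whose $(2+\delta)$-integrability supplies the needed moment. Then $\sum_{i=1}^n\mathbb{E}\big|n^{-1}\{(\bm{H}_i)_{jk}-\mathbb{E}(\bm{H}_i)_{jk}\}\big|^{1+\gamma}=O(n^{-\gamma})\to 0$, the centered average vanishes in probability, and Slutsky's theorem gives $-\partial^2\hat d(\bm{\theta}_0)/\partial\bm{\theta}\partial\bm{\theta}^\top\stackrel{p}{\longrightarrow}\mathcal{I}(\bm{\theta}_0)$, again identically for both $\hat d_{\rm SM}$ and $\hat d_{\rm GSM}$.

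The variance bookkeeping and the Lyapounov verification are essentially forced by the shape of Conditions (C2)--(C3), and I expect them to be routine. The step needing genuine care is the Hessian one: extracting a workable uniform moment (equivalently, uniform-integrability) bound on the $\bm{\theta}$-Hessian of $\rho$ at $\bm{\theta}_0$ out of envelopes that are stated only for the first and third $\bm{\theta}$-derivatives, and checking that the dominating function produced by the interpolation retains enough moments uniformly over $i$ for the triangular-array WLLN to apply. Once that bound is secured, the rest is a standard WLLN-plus-Slutsky argument, and the two estimators are covered simultaneously because only the common M-estimation structure and Conditions (C1)--(C4) enter.
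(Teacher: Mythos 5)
Your CLT step is essentially the paper's own argument: both set $\bm{Z}_i=\partial\rho(\bm{y}_i|\bm{x}_i,\bm{\theta}_0)/\partial\bm{\theta}$, use Proposition \ref{thm:: sm score function expectation} for centering, Condition (C2) for the limiting covariance $\mathcal{I}(\bm{\theta}_0)+\mathcal{J}(\bm{\theta}_0)$, and the Lyapounov theorem for triangular arrays with Condition (C3) supplying the $(2+\delta)$-moment control; you merely make explicit the Cram\'er--Wold reduction and the computation $\sum_i\mathbb{E}|W_{ni}|^{2+\delta}\le\|\bm{a}\|^{2+\delta}n^{-\delta/2}\bigl(n^{-1}\sum_i\widetilde M(\bm{x}_i,\bm{\theta}_0)\bigr)\to 0$, which the paper leaves implicit. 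Where you genuinely diverge is the Hessian step. The paper defines $\bm{z}_{ni}={\rm vec}\bigl(\partial^2\rho/\partial\bm{\theta}\partial\bm{\theta}^\top\bigr)$ and asserts directly that Condition (C4) yields $\sup_n\max_i\mathbb{E}\|\bm{z}_{ni}\|_F^2<\infty$, then invokes the weak law of large numbers together with (C2). That assertion is not immediate, since (C1) and (C4) bound only the first and third $\bm{\theta}$-derivatives of $\rho$, not the second; your von Bahr--Esseen $L^{1+\gamma}$ argument, with the second-derivative envelope manufactured by Landau--Kolmogorov interpolation between $M_1$ and $M_2$ and a H\"older step to trade off $\mathbb{E}|M_1|<\infty$ against $\mathbb{E}|M_2|^{2+\delta}<\infty$, is a legitimate way to close exactly the gap the paper glosses over. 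The price is extra technical care (the interpolation must be run along coordinate directions and a polarization or directional argument is needed for the mixed partials, and the envelopes must be uniform in $i$, as the single constants $m_1,m_2$ in (C1) and (C4) suggest); the benefit is that your WLLN actually follows from the stated conditions, whereas the paper's one-line second-moment claim implicitly assumes a bound the conditions do not literally provide.
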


Based on Proposition \ref{prop::clt score function}, which demonstrates the central limit theorem of the score function and the weak law of large numbers of the Hessian matrix, we are now able to build a central limit theorem for the score matching estimator $\hat{\bm{\theta}}$. The proof is very similar to that of Theorem 1 in \cite{fan2001variable} and is omitted.

\begin{theorem}\label{thm::clt cts sm}
Suppose that Conditions (C1)-(C4) in Section \ref{sect:: thm property} hold. Then
\[
\sqrt{n}(\hat{\bm{\theta}}-\bm{\theta}_0)\stackrel{d}{\longrightarrow} N\left(\bm{0},\mathcal{I}^{-1}(\bm{\theta}_0)+\mathcal{I}^{-1}(\bm{\theta}_0)\mathcal{J}(\bm{\theta}_0)\mathcal{I}^{-1}(\bm{\theta}_0) \right),
\]
where $\hat{\bm{\theta}}$ is given by (\ref{sm est def}) or (\ref{gsm est def}).
\end{theorem}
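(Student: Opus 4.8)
The plan is to treat $\hat{\bm{\theta}}$ as an M-estimator defined by the estimating equation $\partial\hat{d}(\bm{\theta})/\partial\bm{\theta}=\bm{0}$ and to run the classical Taylor-expansion argument for M-estimators over an independent-but-not-identically-distributed triangular array, using Propositions \ref{thm:: sm score function expectation} and \ref{prop::clt score function} as the two analytic inputs. The argument has the same form for $\hat{d}=\hat{d}_{\rm SM}$ and $\hat{d}=\hat{d}_{\rm GSM}$ (with $\rho=\rho_{\rm SM}$ or $\rho=\rho_{\rm GSM}$), so I would describe it once.

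\emph{Step 1: consistency.} First I would show that there is a sequence of roots $\hat{\bm{\theta}}$ of $\partial\hat{d}(\bm{\theta})/\partial\bm{\theta}=\bm{0}$ with $\hat{\bm{\theta}}\stackrel{p}{\to}\bm{\theta}_0$. By Proposition \ref{thm:: sm score function expectation} the limiting estimating function vanishes at $\bm{\theta}_0$, and Condition (C2) makes $\mathcal{I}(\bm{\theta}_0)$ positive definite, so $\bm{\theta}_0$ is a well-separated zero of the limiting estimating function. Condition (C1), via the integrable envelope $M_1$, yields a uniform law of large numbers for $\hat{d}$ and its gradient over the neighborhood $\mathcal{B}$ of $\bm{\theta}_0$ (a standard ULLN for INID arrays). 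Combining these with the classical argument that produces a root of an estimating equation inside a shrinking neighborhood of a well-separated zero (e.g.\ \citealt{van2000asymptotic}, Theorems~5.9 and~5.41) gives the claimed consistent sequence; this is essentially the construction underlying Theorem~1 of \cite{fan2001variable}.

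\emph{Step 2: expansion and Hessian control.} On the event $\{\hat{\bm{\theta}}\in\mathcal{B}\}$, which has probability tending to one, a componentwise second-order Taylor expansion of $\partial\hat{d}/\partial\theta_k$ about $\bm{\theta}_0$ gives
\[
\bm{0}=\frac{\partial\hat{d}(\bm{\theta}_0)}{\partial\bm{\theta}}+\left[\frac{\partial^2\hat{d}(\bm{\theta}_0)}{\partial\bm{\theta}\,\partial\bm{\theta}^\top}+\bm{R}_n\right](\hat{\bm{\theta}}-\bm{\theta}_0),
\]
where the $k$-th row of $\bm{R}_n$ is $\tfrac12(\hat{\bm{\theta}}-\bm{\theta}_0)^\top\{\partial^3\hat{d}(\bm{\theta}_k^*)/\partial\theta_k\partial\bm{\theta}\partial\bm{\theta}^\top\}$ for some $\bm{\theta}_k^*$ on the segment joining $\bm{\theta}_0$ and $\hat{\bm{\theta}}$. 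Condition (C4) bounds every entry of each third-derivative array by $n^{-1}\sum_{i=1}^n M_2(\bm{y}_i)$, which is $O_p(1)$ by Markov's inequality and $m_2<\infty$; with Step~1 this gives $\bm{R}_n=O_p(\|\hat{\bm{\theta}}-\bm{\theta}_0\|)=o_p(1)$. By Proposition \ref{prop::clt score function}, $-\partial^2\hat{d}(\bm{\theta}_0)/\partial\bm{\theta}\partial\bm{\theta}^\top\stackrel{p}{\to}\mathcal{I}(\bm{\theta}_0)$, which is positive definite, so the bracketed matrix is invertible with probability tending to one and its inverse converges in probability to $\mathcal{I}(\bm{\theta}_0)^{-1}$.

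\emph{Step 3: conclusion, and the main obstacle.} Rearranging the display and using the central limit theorem $\sqrt{n}\,\partial\hat{d}(\bm{\theta}_0)/\partial\bm{\theta}\stackrel{d}{\to}N(\bm{0},\mathcal{I}(\bm{\theta}_0)+\mathcal{J}(\bm{\theta}_0))$ from Proposition \ref{prop::clt score function} together with Slutsky's theorem yields
\[
\sqrt{n}(\hat{\bm{\theta}}-\bm{\theta}_0)=-\left[\frac{\partial^2\hat{d}(\bm{\theta}_0)}{\partial\bm{\theta}\,\partial\bm{\theta}^\top}+\bm{R}_n\right]^{-1}\sqrt{n}\,\frac{\partial\hat{d}(\bm{\theta}_0)}{\partial\bm{\theta}}\stackrel{d}{\to}N\!\left(\bm{0},\ \mathcal{I}^{-1}(\bm{\theta}_0)+\mathcal{I}^{-1}(\bm{\theta}_0)\mathcal{J}(\bm{\theta}_0)\mathcal{I}^{-1}(\bm{\theta}_0)\right),
\]
using $\mathcal{I}^{-1}(\mathcal{I}+\mathcal{J})\mathcal{I}^{-1}=\mathcal{I}^{-1}+\mathcal{I}^{-1}\mathcal{J}\mathcal{I}^{-1}$. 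I expect the delicate part to be Step~1: since the design is fixed and the observations are INID, there is no i.i.d.\ structure to invoke, so the uniform law of large numbers and the well-separated-minimum argument must be carried out for a triangular array, which is exactly where the envelope in (C1) and the positive-definiteness in (C2) enter. The other nonroutine point is controlling $\bm{R}_n$ uniformly over the random segment between $\bm{\theta}_0$ and $\hat{\bm{\theta}}$, which is handled by the envelope $M_2$ in (C4); everything else follows mechanically once Propositions \ref{thm:: sm score function expectation} and \ref{prop::clt score function} are available.
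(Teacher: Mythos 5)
The paper omits the proof of this theorem entirely, deferring to Theorem 1 of Fan and Li (2001), and your proposal is precisely that standard M-estimation route --- Taylor expansion of the estimating equation at $\bm{\theta}_0$, remainder control via the third-derivative envelope in (C4), and Slutsky combined with Proposition \ref{prop::clt score function} --- so it is correct and simply supplies the details the authors leave out. The one place to tighten is Step 1: the well-separated-zero/ULLN argument requires an identification condition over $\mathcal{B}$ that (C1)--(C4) do not literally provide, whereas the Fan--Li device of showing $\hat{d}(\bm{\theta}_0+n^{-1/2}\bm{u})>\hat{d}(\bm{\theta}_0)$ on the sphere $\|\bm{u}\|=C$ with probability tending to one yields a $\sqrt{n}$-consistent local solution directly from your Step 2 expansion, with no separate consistency step needed.
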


In practice, both $\mathcal{I}(\bm{\theta}_0)$ and $\mathcal{J}(\bm{\theta}_0)$ are unknown. To make the above theorem useful, one needs to find consistent estimators of these matrices. Using the fact that $\mathcal{I}_n(\bm{\theta}_0)\to \mathcal{I}(\bm{\theta}_0)$ and $\mathcal{J}_n(\bm{\theta}_0)\to \mathcal{J}(\bm{\theta}_0)$, we can show that the asymptotic covariance matrix $\mathcal{I}^{-1}(\bm{\theta}_0)+\mathcal{I}^{-1}(\bm{\theta}_0)\mathcal{J}(\bm{\theta}_0)\mathcal{I}^{-1}(\bm{\theta}_0)$ can be consistently estimated by $\mathcal{I}^{-1}_n(\hat{\bm{\theta}})+\mathcal{I}_n^{-1}(\hat{\bm{\theta}})\mathcal{J}_n(\hat{\bm{\theta}})\mathcal{I}_n^{-1}(\hat{\bm{\theta}})$. Additionally, it would be more convenient in practice to use $\hat{\mathcal{I}}_n(\hat{\bm{\theta}})$ and $\hat{\mathcal{I}}_n(\hat{\bm{\theta}})$ which are the sample analogues of $\mathcal{I}_n(\hat{\bm{\theta}})$ and $\mathcal{J}_n(\hat{\bm{\theta}})$, respectively.

\section{Case Study: Doctoral Publication Data Analysis}\label{sect::case study}

We analyzed data from \cite{long1990origins} on the number of publications produced by Ph.D. biochemists. This dataset aims to examine the relationship between the number of Ph.D. scientist's publications and the gender (coded one for female), the marriage status (coded one if married), the number of children under age six (\emph{kid5}), the prestige of Ph.D. program (\emph{phd}) and the number of articles by mentor in last three years (\emph{mentor}). We focused only on the students with at least one publication and the data consists of 640 Ph.D. candidates and the average number of publications is 2.42. The number of publications exhibits strong over-dispersion (\citealt{long2006regression}). We compared fits to the original data using the CMP regression model with approximate MLE and generalized score matching estimation. Both models were fit using a Windows desktop with an AMD Ryzen 9 5900X CPU running at 3.7 GHz and 32 GB RAM. The approximate MLE was obtained by using the function \code{glm.cmp} from the R package \pkg{COMPoissonReg} accompanying the paper by \cite{sellers2010flexible} and the generalized score matching estimator was performed using the function \code{optim} in the R package \pkg{stats} (\citealt{R-Core-Team:2013wf}) with the Nelder-Mead algorithm (\citealt{nelder1965simplex}).

As seen in Example \ref{exmp::CMP}, the normalizing constant is an infinite sum which is intractable. Therefore, within the maximum likelihood framework, several approximation approaches have been proposed, i.e., use of a truncation  (\citealt{minka2003computing}) and an asymptotic approximation (\citealt{shmueli2005useful}) of the normalization constant series. We refer to such estimation approach as approximate MLE. However, these approximations will become inaccurate under some situations. For example, the asymptotic approximation is accurate only if $\lambda_i>10^\nu$. It should be emphasized that the function \code{glm.cmp} uses a hybrid method that includes the truncation and asymptotic approximations of the normalizing constant.

The estimated coefficients, standard errors (SEs), absolute t-statistics, dispersion and computer run times from the CMP regression model with approximate MLE and score matching estimation are given in Table \ref{cmp::case study2}. It is worth noting that the SEs of generalized score matching estimates are obtained based on the consistent estimator of the asymptotic variance, which is carefully discussed in Section \ref{sect:: thm property}. 
\begin{table}[H]
\caption{Comparison of the approximate MLE and generalized score matching estimations for the CMP regression model with the doctoral publication dataset.\label{cmp::case study2}}
\centering

\begin{tabular}{ll | rrr | rrr}
  \hline
&&\multicolumn{3}{c|}{Approximate MLE}  & \multicolumn{3}{c}{Generalized score matching} \\
& Coefficient & Estimate & SE & $|t|$ & Estimate & SE & $|t|$ \\ 
   \hline
& intercept            &   -0.3345 &0.0712  & 4.6955 &-0.3141 &0.1022& 3.0736\\
& gender(Female).        &0.0097 &0.0594 & 0.1633 &-0.0893 & 0.0749 &  1.1931 \\
& marriage(Married)    & 0.0993& 0.0678 & 1.4654& 0.0445 & 0.0844 & 0.5268 \\
& kid5                  &-0.0726 &0.0339 & 2.1388   &  -0.0705 &0.0421 & 1.6747 \\
& phd                 & -0.0132 &0.0281 & 0.4687 & 0.0693 & 0.0394  & 1.7583  \\
& mentor                 & 0.1553 &0.0204 & 7.6252 & 0.0830 & 0.0347  & 2.3925  \\
\hline
&dispersion &   0.3698 &0.0402 & & 0.2564 &  0.0827 &\\
& run time (seconds) && 1.55   &&&0.70 &
\end{tabular}
\end{table}

For the standard CMP model, the interpretation of coefficients is opaque (\citealt{Huang2017}), but we can ascertain the direction and statistical significance of the effect of each covariate from the fitted model. For example, Ph.D. candidates enrolled in the program with higher prestige tend to have significantly more publications. Even though the standard errors of approximate MLE in Table \ref{cmp::case study2} are smaller than the ones of generalized score matching, the inference produced by approximate MLE is not reliable. To study the standard errors of these two estimation methods, we first constructed confidence intervals based on the SE and the results are plotted in blue in Figure \ref{fig::case study ci}. The confidence interval of an estimated coefficient $\hat{\theta}_j$ based on its SE $s_j$ is given by $(\hat{\theta}_j-1.96s_j,\hat{\theta}_j+1.96s_j)$ for $j=1,\cdots,7$, where the estimated coefficients and SEs are reported in Table \ref{cmp::case study2}. We then generated $1,000$ bootstrap samples and computed the $95\%$ parametric bootstrap confidence intervals based on bootstrap percentile (\citealt{Efron1994}), whose results are given in Figure \ref{fig::case study ci} and plotted in red. It can be readily seen that, in Figure \ref{fig::case study ci}, the bootstrap confidence intervals and the confidence intervals based on SE overlap for generalized score matching. However, the length of the bootstrap confidence intervals is always larger than the one of the confidence intervals based on SE for approximate MLE. Figure \ref{fig::case study ci} indicates that the approximate MLE method underestimates the variance of the parameters and yields biased estimates.

\begin{figure}
    \centering
    \begin{minipage}{0.5\textwidth}
        \centering
        \includegraphics[width=1\textwidth]{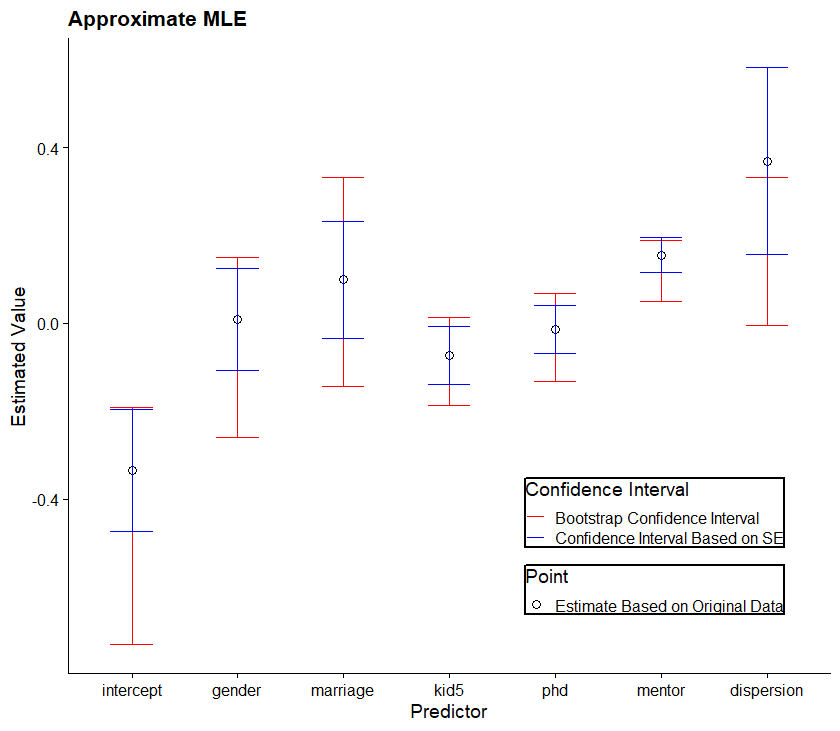} 
    \end{minipage}\hfill
    \begin{minipage}{0.5\textwidth}
        \centering
        \includegraphics[width=1\textwidth]{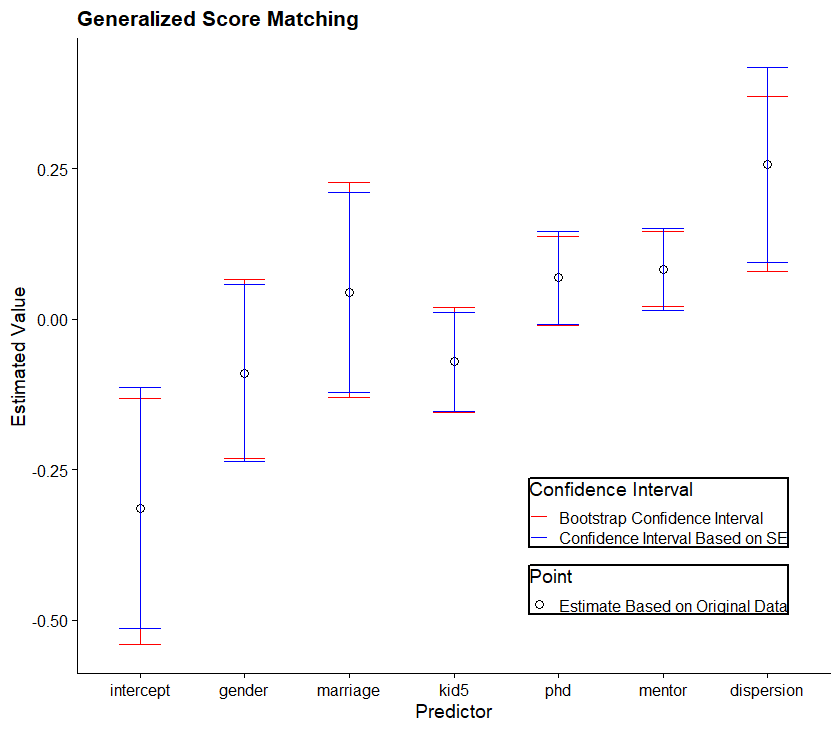} 
    \end{minipage}
    \caption{Plots of $95\%$ parametric bootstrap confidence intervals and confidence intervals based on SE for fitted CMP regression models by (a) approximate MLE and (b) generalized score matching.\label{fig::case study ci}}
\end{figure}

In this data set, the range of the estimated $\lambda_i$ calculated by the \code{glm.cmp} function is from 0.4885 to 1.2223 while the one of the generalized score matching estimated $\lambda_i$ is from 0.5019 to 1.2073. It is worth noting that the relationship between the estimated $\lambda_i$ and the estimated $\nu$ will give inaccurate approximate MLE (\citealt{shmueli2005useful}), which will be indicated by the following simulation results for the CMP regression model. Additionally, the generalized score matching approach took only 0.7 seconds, which is much faster than using approximate MLE to fit the model.

\begin{figure}
    \centering
    \includegraphics[scale=0.3]{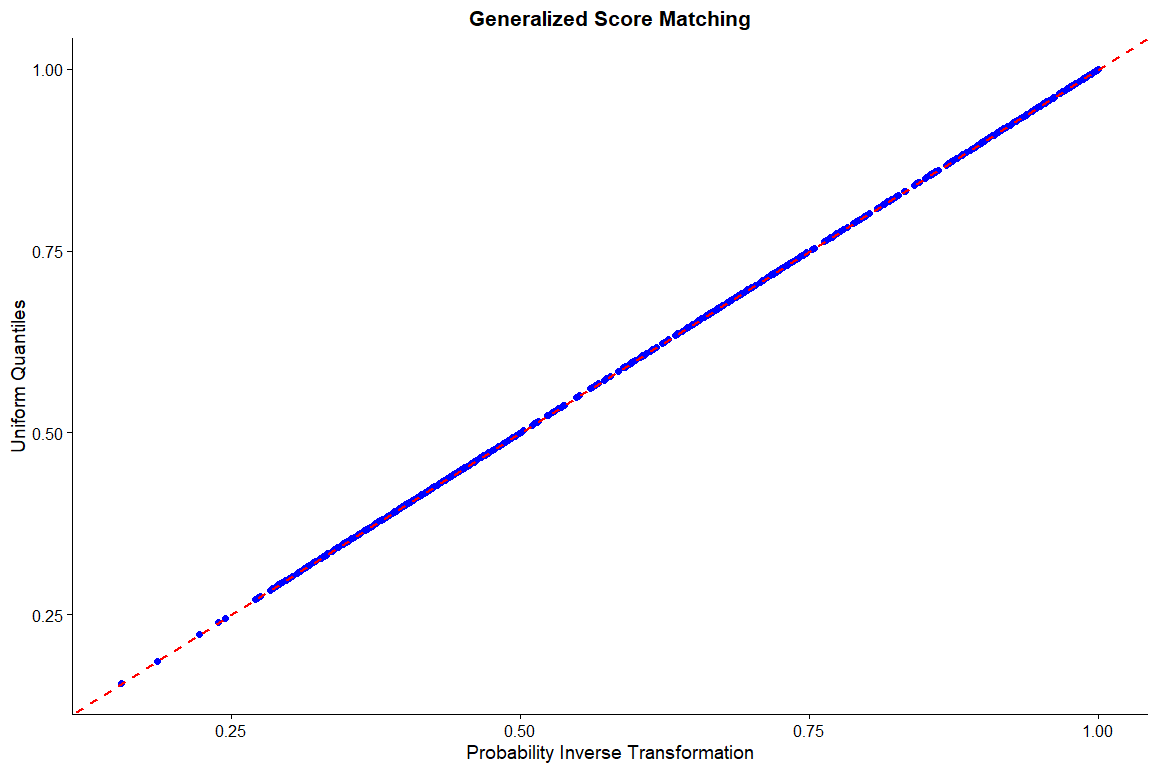} 
    \caption{PIT-uniform quantile plot for the fitted CMP regression model by generalized score matching.\label{fig::case study}}
\end{figure}

For model diagnostics, a PIT-uniform quantile plot is given in Figure \ref{fig::case study}. This plot shows reasonable closeness to uniformity, which indicates that the fitted CMP regression model using generalized score matching is appropriate. To examine the prediction accuracy of the fitted models using our generalized score matching estimators and approximate MLE, we randomly split the whole data set into training and test data sets where the training set contained 448 samples and the test set contained 192 observations. The predicted number of publications was obtained by taking the mean of simulated data from the CMP model with estimated parameters. It is worth noting that simulating data from a CMP by using the approximation of the normalizing constant is not proper in our numerical study as the approximation may be inaccurate. We decided to use the rejection sampling algorithm by \cite{chanialidis2018efficient}, which is an exact method to generate CMP data. The test MSE for the model fitted by generalized score matching is 2.64 and the test MSE for the model using the approximate MLE estimation is 3.90. The comparison of the test MSEs indicate better performance of our proposed generalized score matching method.

\section{Simulation Studies}\label{sect::numerical study}
We conducted a numerical study to evaluate the performance of the score matching estimator for continuous and discrete data. We are particularly interested in examining the bias, standard deviation and the root mean squared error of the score matching estimator. For continuous data, we conducted a simulation study based on truncated Gaussian regression models. For discrete data, we constructed a simulation based on the dataset discussed in Section \ref{sect::case study} to compare our generalized score matching approach with the approximate MLE approach. 

In both settings of the simulations, the sample size $n$ varied in $\{200,500,1000\}$. Additionally, all simulations were conducted via $1,000$ replicates. For the purpose of assessing the performance of parameter estimators, we denote $\hat{\bm{\theta}}^{(k)}$ as the vector estimation of $\bm{\theta}$ in the $k$-th replicate. For each component of $\bm{\theta}$, which is $\theta_j$, the averaged bias of $\hat{\theta}_j^{(k)}$, $k=1,\cdots,1,000$, is $\textrm{BIAS}=\frac{1}{1000}\sum_{k}(\hat{\theta}_j^{(k)}-\theta_j)$, and the standard deviation of $\hat{\theta}_j^{(k)}$ is $\textrm{SD}=\Big\{\frac{1}{1000}\sum_{k_1}(\hat{\theta}_j^{(k_1)}-\frac{1}{1000}\sum_{k_2}\hat{\theta}_j^{(k_2)})^2\Big\}^\frac{1}{2}$. Therefore, the root mean squared error is $\textrm{RMSE}=\sqrt{\textrm{SD}^2+\textrm{BIAS}^2}$. To compare SD with the asymptotic standard deviation of the estimators, we consider a sample version of the asymptotic standard deviation which is a consistent estimation of the intractable asymptotic standard deviation. This sample version is denoted by ASD and the details are carefully discussed in Section \ref{sect:: thm property}. Furthermore, we use BIAS(SM) and BIAS(AMLE) to denote the average bias of the generalized score matching estimator and the approximate MLE, respectively. Similarly, RSME(SM) and RSME(AMLE) are used to denote the root mean squared error of the generalized score matching estimator and the approximate MLE, respectively.

\subsection{Setting 1: simulation for truncated Gaussian regression model}
We simulated data from a truncated Gaussian regression model as follow. For $i=1,\cdots,n$, consider the $2\times 1$ covariate vector $\bm{x}_i$ with $\bm{x}_i=(x_{i1},x_{i2})^\top$, $x_{i1}\equiv 1$, $x_{i2}$ being independent and identically generated from the standard normal distribution $N(0,1)$, and their corresponding regression parameters are ${\rm vec}(\bm{B}_0)=(B_{11},B_{21},B_{12},B_{22})^\top=(1,0.4,-0.5,0.2)^\top$. The true precision matrix $\bm{\Lambda}_0$ is set to be ${\rm vec}(\bm{\Lambda}_0)=(\Lambda_{11},\Lambda_{21},\Lambda_{12},\Lambda_{22})^\top$ $=(20,10,10,30)^\top$. It is worth noting that the covariate matrix is fixed across the replications. The domain of the response $\bm{y}_i=(y_{i1},y_{i2})^\top$ is chosen to be $\mathbb{R}_{>0}^2$ where $\mathbb{R}_{>0}$ denotes the set of positive real numbers. We used the rejection algorithm (\citealt{wilhelm2010tmvtnorm}) to generate the data. That is, we continued generating candidate $\bm{y}_i$ from the bivariate Gaussian distribution $N(\bm{B}_0\bm{x}_i,\bm{\Lambda}_0^{-1})$ until the candidate located inside the support region $\mathbb{R}_{>0}^2$. Figure \ref{fig:first} shows random samples from the bivariate Gaussian distribution $N(\bm{B}_0\bm{x}_i,\bm{\Lambda}_0^{-1})$ and the samples from the truncated Gaussian regression model are located inside the red box.

\begin{figure}
\begin{center}
\includegraphics[scale=0.4]{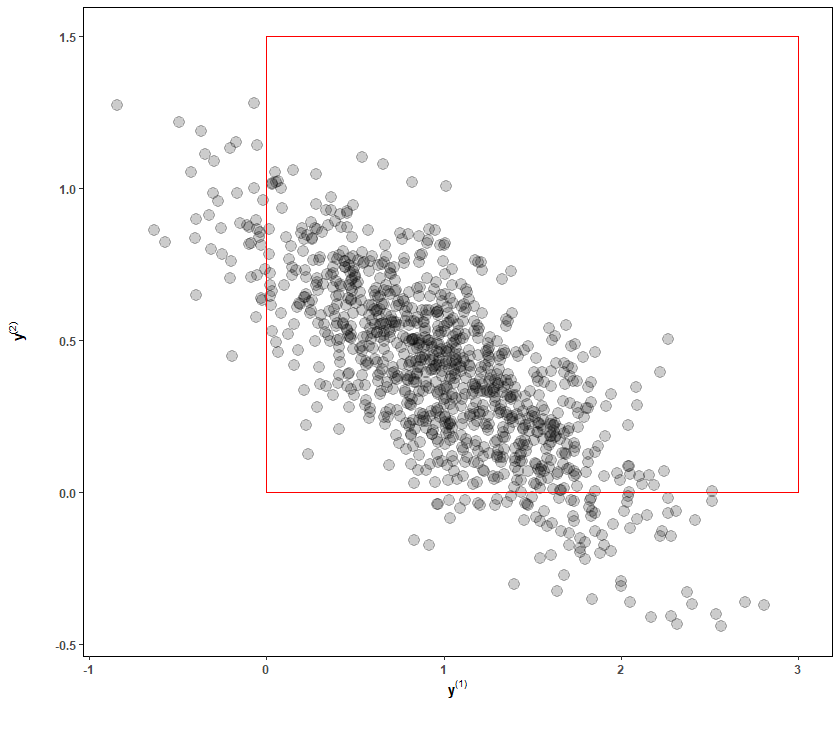}
\end{center}
\caption{Random samples from a truncated Gaussian regression model generated by the rejection algorithm with $\bm{y}^{(1)}=(y_{11},\cdots,y_{n1})^\top$ and $\bm{y}^{(2)}=(y_{12},\cdots,y_{n2})^\top$\label{fig:first}}
\end{figure}

Our parameter of interest is $\bm{\theta}=\left({\rm vec}^\top(\bm{B}),{\rm vech}^\top(\bm{\Lambda})\right)^\top$. If we conduct a log transformation to the response, then we will have a new variable $\widetilde{\bm{y}}_i=({\rm log} y_{i1},{\rm log} y_{i2})^\top$. After the transformation, the domain of the response $\widetilde{\bm{y}}_i$ is $\mathbb{R}^2$ and the log-transformed truncated Gaussian regression model is given by
\[
p(\widetilde{\bm{y}}_i|\bm{x}_i,\bm{\theta})=\frac{1}{Z(\bm{\theta})}\exp \left\{ \widetilde{\bm{y}}_i^\top \bm{l}-\frac{1}{2}\left(\bm{t}(\widetilde{\bm{y}}_i)-\bm{B}\bm{x}_i \right)^\top\bm{\Lambda} \left(\bm{t}(\widetilde{\bm{y}}_i)-\bm{B}\bm{x}_i  \right) \right\},
\]
where $\bm{l}=(1,1)^\top$ and $\bm{t}(\widetilde{\bm{y}}_i)=(e^{\widetilde{y}_{i1}},e^{\widetilde{y}_{i2}})^\top$. The score matching objective function is then in the form of
\begin{align}\label{log transformed tgm objective function}
\hat{d}_{\rm SM}(\bm{\theta})=&\frac{1}{n}\sum_{i=1}^n\Bigg\{ -4 \left(\bm{t}(\widetilde{\bm{y}}_i)-\bm{B}\bm{x}_i  \right)^\top\bm{\Lambda} \bm{t}(\widetilde{\bm{y}}_i)-2{\rm tr}\left(\bm{T}(\widetilde{\bm{y}}_i)\bm{\Lambda} \bm{T}(\widetilde{\bm{y}}_i)\right) \nonumber \\
&+\left(\bm{t}(\widetilde{\bm{y}}_i)-\bm{B}\bm{x}_i  \right)^\top \bm{\Lambda} \bm{T}(\widetilde{\bm{y}}_i) \bm{T}(\widetilde{\bm{y}}_i) \bm{\Lambda} \left(\bm{t}(\widetilde{\bm{y}}_i)-\bm{B}\bm{x}_i \right) \Bigg\},
\end{align}
where $\bm{T}(\widetilde{\bm{y}}_i)={\rm diag}(e^{\widetilde{y}_{i1}},e^{\widetilde{y}_{i2}})$. In Section \ref{sup sec::equi proof} of the supplementary material, we have shown that the objective function (\ref{log transformed tgm objective function}) of the log-transformed truncated Gaussian regression model is equivalent to the objective function of the truncated Gaussian regression model using weight functions (\citealt{yu2019generalized}).

For the truncated Gaussian regression model, Table \ref{tnormal:sim2} below reports the BIAS(SM), SD, ASD and RSME(SM) of the score matching estimator via $1,000$ replications with three sample sizes. According to Table \ref{tnormal:sim2}, we find that the absolute values of BIAS(SM) and SD generally become smaller for all parameter estimates as $n$ becomes larger. It is not surprising that RSME(SM) shows the same pattern. Furthermore, we notice that the absolute values of the difference between SD and ASD also become smaller for all estimators when $n$ gets larger. The above findings support our theoretical results that the score matching estimator for continuous data is consistent and asymptotically normal.

\begin{table}[H]
\caption{Comparison of the score matching estimation of the parameters ($B_{11}=1$, $B_{21}=0.4$, $B_{12}=-0.5$, $B_{22}=0.2$, $\Lambda_{11}=20$, $\Lambda_{12}=10$, $\Lambda_{22}=30$) for the truncated Gaussian regression model. Four measures are considered: the averaged bias of the estimate (BIAS(SM)), the standard deviation of the estimate (SD), the theoretical standard deviation of the estimate in the estimation theory (ASD), and the root mean squared error of the estimate (RMSE(SM)).\label{tnormal:sim2}}
\centering
\begin{tabular}{lclrrrrrrr}
  \hline
&$n$&Measure& $\hat{B}_{11}$ & $\hat{B}_{21}$ & $\hat{B}_{12}$ & $\hat{B}_{22}$ & $\hat{\Lambda}_{11}$ & $\hat{\Lambda}_{12}$ &  $\hat{\Lambda}_{22}$ \\ 
  \hline
&$n=200$&BIAS(SM) & 0.0015 & -0.0010 & 0.0006 & -0.0007 & 1.0372 & 0.5327 & 1.7498 \\ 
    &&SD   & 0.0313 & 0.0325 & 0.0367 & 0.0329 & 2.9603 & 2.4360 & 5.5817 \\   
     &&ASD& 0.0281 & 0.0255 & 0.0322 & 0.0270 & 2.6505 & 3.2547 & 4.4425 \\ 
    &&RMSE(SM)  & 0.0314 & 0.0325 & 0.0367 & 0.0329 & 3.1368 & 2.4936 & 5.8495 \\
   \hline
  &$n=500$&BIAS(SM)& 0.0010 & -0.0006 & 0.0005 & -0.0001 & 0.4147 & 0.0982 & 0.8257 \\ 
     &&SD  & 0.0188 & 0.0189 & 0.0240 & 0.0221 & 1.9426 & 1.5699 & 3.4566 \\ 
     &&ASD   & 0.0177 & 0.0160 & 0.0216 & 0.0183 & 1.7331 & 2.0596 & 2.8432 \\  
    &&RMSE(SM) & 0.0177 & 0.0160 & 0.0216 & 0.0183 & 1.7331 & 2.6596 & 2.8432 \\  
   \hline
   &$n=1000$&BIAS(SM) & -0.0001 & -0.0002 & -0.0000 & -0.0002 & 0.2134 & 0.1025 & 0.3003 \\   
    &&SD  & 0.0139 & 0.0148 & 0.0180 & 0.0154 & 1.2424 & 1.1253 & 2.4593 \\  
    &&ASD& 0.0131 & 0.0125 & 0.0156 & 0.0128 & 1.2441 & 1.5766 & 2.0818 \\  
    &&RMSE(SM)  & 0.0139 & 0.0148 & 0.0180 & 0.0154 & 1.2606 & 1.1299 & 2.4776 \\
   \hline
\end{tabular}
\end{table}

\subsection{Setting 2: simulation for CMP regression model}
For $i=1,\cdots,n$, we considered the $6\times 1$ covariate vector $\bm{x}_i$ which is randomly sampled from the original publication dataset in Section \ref{sect::case study}, and their corresponding regression parameters are the generalized score matching estimation, that is $\bm{\beta}_0=(\beta_1,\beta_2,\beta_3,\beta_4,\beta_5,\beta_6)^\top=(-0.3141,-0.0893,0.0445,-0.0705,0.0693,0.0830)^\top$. The true dispersion parameter is set to be $\nu_0=0.2564$ and the covariate matrix is fixed across the replications. Our parameter of interest is $\bm{\theta}=\left(\bm{\beta}^\top,\nu\right)^\top$ and the empirical objective function of the CMP regression model is given by
\[
\hat{d}_{\rm GSM}(\bm{\theta})=\frac{1}{n}\sum_{i=1}^n \rho_{\rm GSM}(y_i|\bm{x}_i,\bm{\theta}),
\]
where 
\[
\rho_{\rm GSM}(y_i|\bm{x}_i,\bm{\theta})=t^2\left(\frac{\lambda_i}{(y_i+1)^\nu}\right)+t^2\left(\frac{\lambda_i}{y_i^\nu}\right)-2t\left(\frac{\lambda_i}{(y_i+1)^\nu}\right),
\]
with $\lambda_i=\exp (\bm{x}_i^\top\bm{\beta})$. 

For the CMP regression model, Table \ref{normal:sim4} reports the BIAS(SM), SD, ASD and RSME(SM) of the generalized score matching estimator, along with the BIAS(AMLE) and RSME(AMLE) of the approximate MLE, via $1,000$ replications with three sample sizes. Table \ref{normal:sim4} indicates that the resulting estimation yields qualitatively similar conclusions to those obtained from the truncated Gaussian regression setting. Moreover, we notice that the generalized score matching method is much more accurate when the approximate MLE is biased. We further investigated the empirical coverages of a $95\%$ confidence interval constructed by generalized score matching estimation and approximate MLE and their asymptotic normal distributions, respectively, whose results are given in Table \ref{cmp coverage}. The above indicates that the inference of approximate MLE is not reliable. The above findings support our theoretical results that the generalized score matching estimation is consistent and asymptotically normal. As the original maximum likelihood estimation approach is intractable and the approximate MLE is biased under some situations, generalized score matching can be used to produce consistent estimation and reliable inference.

\begin{table}[H]
\caption{Comparison of the generalized score matching estimations and approximate MLEs of the parameters ($\beta_1=-0.3141$, $\beta_2=-0.0893$, $\beta_3=0.0445$, $\beta_4=-0.0705$, $\beta_5=0.0693$, $\beta_6=0.0830$ $\nu=0.2564$) for the CMP regression model. Six measures are considered: the averaged bias of the estimate (BIAS(SM)), the compared average bias of the estimate calculated by \code{glm.cmp} (BIAS(AMLE)), the standard deviation of the estimate (SD), the theoretical standard deviation of the estimate in the estimation theory (ASD), the root mean squared error of the estimate (RMSE(SM)), and the compared root mean squared error of the estimate calculated by \code{glm.cmp} (RMSE(AMLE)).\label{normal:sim4}}
\centering
\scalebox{0.9}{
\begin{tabular}{lclrrrrrrr}
  \hline
&$n$&Measure& $\hat{\beta}_1$ & $\hat{\beta}_2$ & $\hat{\beta}_3$ & $\hat{\beta}_4$ & $\hat{\beta}_5$ & $\hat{\beta}_6$ & $\hat{\nu}$ \\ 
  \hline
&$n=200$&BIAS(SM)& 0.0204 & 0.0053  & 0.0084 & -0.0051 & 0.0063& 0.0086 & 0.0643 \\
  &&BIAS(AMLE)  & 0.1986 & 0.0713 & 0.0211 & 0.0154 & 0.0292  & 0.0424 & 0.2950 \\
  &&SD  & 0.1970 & 0.1531 & 0.1598 & 0.0914 & 0.0743 & 0.0752 & 0.1491 \\
  &&ASD & 0.1871 & 0.1391 & 0.1466 & 0.0838 & 0.0700 & 0.0677 & 0.1528 \\
  &&RMSE(SM)& 0.1981 & 0.1532 & 0.1600 & 0.0915 & 0.0746 & 0.0756 & 0.1623 \\
  &&RMSE(AMLE) & 0.3005 & 0.1945 & 0.1831 & 0.0917 & 0.0817 & 0.0845 & 0.3175 \\
   \hline
  &$n=500$&BIAS(SM)& 0.0152 & -0.0050 & -0.0038 &  -0.0034 & 0.0060 & 0.0037 & 0.0391 \\
&&BIAS(AMLE)  & 0.2076 & 0.0517 & -0.0198 & 0.0123 & 0.0328 & 0.0227 & 0.2825 \\
 &&SD & 0.1248  & 0.0996 & 0.1055 & 0.0515 & 0.0466 & 0.0385 & 0.1047 \\
 &&ASD & 0.1170 & 0.0850 & 0.0944 & 0.0481 & 0.0419 & 0.0337 & 0.0963 \\ 
&&RMSE(SM) & 0.1257 & 0.0997 & 0.1055 & 0.0516& 0.0470 & 0.0387 & 0.1118 \\
&&RMSE(AMLE) & 0.2585 & 0.1432 & 0.1420 & 0.0659 & 0.0656 & 0.0519 & 0.2992 \\
   \hline
 &$n=1000$&BIAS(SM) & 0.0132 & 0.0032 & -0.0017 & -0.0019 & 0.0011 & 0.0013 & 0.0254 \\
&&BIAS(AMLE)  & 0.2153 & 0.0533 & -0.0061 & 0.0130 & 0.0347 & 0.0336& 0.2899 \\
 &&SD  & 0.0889 & 0.0695 & 0.0770 & 0.0392 & 0.0346 & 0.0274 & 0.0798 \\
&&ASD & 0.0819 & 0.0606 & 0.0669 & 0.0357 & 0.0302 & 0.0240 & 0.0746 \\  
&&RMSE(SM) & 0.0899 & 0.0695 & 0.0770 & 0.0393 & 0.0346 & 0.0274 & 0.0837 \\  
&&RMSE(AMLE) & 0.2563 & 0.1353 & 0.1266 & 0.0602 & 0.0617 & 0.0507 & 0.3041 \\
   \hline
\end{tabular}}
\end{table}
\noindent

\begin{table}[H]
\caption{Comparison of the generalized score matching estimations and approximate MLEs of the parameters ($\beta_1=-0.3141$, $\beta_2=-0.0893$, $\beta_3=0.0445$, $\beta_4=-0.0705$, $\beta_5=0.0693$, $\beta_6=-1.3610$ $\nu=0.2564$) for the CMP regression model. One measure is considered: the empirical coverages of a $95\%$ confidence interval constructed by generalized score matching estimations and approximate MLEs and their asymptotic normal distributions, respectively.\label{cmp coverage}}
\centering
\begin{tabular}{lcclrrrr}
  \hline
&Estimation&$n$&Measure& $\beta_1$ & $\beta_2$ & $\beta_3$ & $\beta_4$  \\ 
  \hline
&\multirow{3}{*}{Score matching}&$n=200$&Coverage& 0.9280 &0.9460 &0.9260 &0.9460 \\
  &&$n=500$ &Coverage & 0.9420 &0.9440& 0.9360& 0.9540   \\
  &&$n=1000$ &Coverage& 0.9580 &0.9480 &0.9540& 0.9580 \\
  \hline
&\multirow{3}{*}{Approximate MLE}&$n=200$&Coverage& 0.7600& 0.7900& 0.8060& 0.8820 \\
  &&$n=500$ &Coverage & 0.4820& 0.6160& 0.7240 &0.7460   \\
  &&$n=1000$ &Coverage& 0.3980 &0.4120& 0.6280 &0.6860  \\
  \hline
&Estimation&$n$&Measure& $\beta_5$ & $\beta_6$ & $\nu$  \\ 
  \hline
  &\multirow{3}{*}{Score matching}&$n=200$&Coverage& 0.9580 &0.9580&0.9600 \\
  &&$n=500$  &Coverage & 0.9420 &0.9460 &0.9460   \\
  &&$n=1000$ &Coverage & 0.9520 &0.9520 &0.9580\\
   \hline
&\multirow{3}{*}{Approximate MLE}&$n=200$&Coverage& 0.8500 &0.8260 &0.8200 \\
  &&$n=500$  &Coverage &0.7140 &0.7100 &0.4320  \\
  &&$n=1000$ &Coverage &0.6260 &0.5380&0.2220\\
   \hline
\end{tabular}
\end{table}

\section{Conclusion}

In this article, we propose generalized score matching methods for probabilistic regression-type models with an intractable normalizing constant and derive the consistency and asymptotic normality of the estimators. Specifically, we consider a novel score matching estimator for continuous multivariate data with covariates and present a novel generalized score matching approach for count data. The proposed generalized score matching approach goes beyond previous research in that it can be applied to univariate and multivariate count data.

The simulations and real data analysis support our theoretical results. In addition, we see that the generalized score matching approach can provide accurate estimation and reliable inference for CMP regression models in situations where the approximate MLE yields biased estimates.

\section*{Appendix}

\begin{proof}[Proof of Theorem \ref{thm:: tractable obj funct for GSM}]
Recall that the transformation function $t(f_1(y)/f_2(y))$ will give a zero value when $f_2(y)=0$ and $t(f_1(y)/f_2(y))=1$ when $f_1(y)=0$. In other words, the transformation $t(f_1(y)/f_2(y))$ will return a constant when either $f_1(y)$ or $f_2(y)$ equals zero. Therefore, for the sake of simplicity, we assume that $q(y|\bm{x}_i)$ and $p(y|\bm{x}_i,\bm{\theta})$ are non-zero for $y\in\mathcal{D}$. Note that we have
\begin{align*}
D_{\rm GSM}(q_i,p_i)=&\sum_{y\in\mathcal{D}}q(y|\bm{x}_i) \Bigg\{ t\left( \frac{p(y^+|\bm{x}_i,\bm{\theta})}{p(y|\bm{x}_i,\bm{\theta})} \right)^2 + t\left( \frac{p(y|\bm{x}_i,\bm{\theta})}{p(y^-|\bm{x}_i,\bm{\theta})}\right)^2 \Bigg\}\\
&-2\sum_{y\in\mathcal{D}}q(y|\bm{x}_i) \Bigg\{ t\left( \frac{p(y^+|\bm{x}_i,\bm{\theta})}{p(y|\bm{x}_i,\bm{\theta})} \right) t\left( \frac{q(y^+)}{q(y)} \right) \\
&  + t\left( \frac{p(y|\bm{x}_i,\bm{\theta})}{p(y^-|\bm{x}_i,\bm{\theta})}\right) t\left( \frac{q(y|\bm{x}_i)}{q(y^-|\bm{x}_i)} \right)  \Bigg\}+C,
\end{align*}
where $C$ does not depend on $\bm{\theta}$. We first consider the second term, which can be manipulated as follows:
\begin{align*}
&\sum_{y\in\mathcal{D}}q(y|\bm{x}_i) \Bigg\{ t\left( \frac{p(y^+|\bm{x}_i,\bm{\theta})}{p(y|\bm{x}_i,\bm{\theta})} \right) t\left( \frac{q(y^+|\bm{x}_i)}{q(y|\bm{x}_i)} \right)+ t\left( \frac{p(y|\bm{x}_i,\bm{\theta})}{p(y^-|\bm{x}_i,\bm{\theta})}\right) t\left( \frac{q(y|\bm{x}_i)}{q(y^-|\bm{x}_i)} \right)  \Bigg\}\nonumber\\
=&\sum_{y\in\mathcal{D}}q(y|\bm{x}_i) \Bigg\{ t\left( \frac{p(y^+|\bm{x}_i,\bm{\theta})}{p(y|\bm{x}_i,\bm{\theta})} \right)  \frac{q(y|\bm{x}_i)}{q(y^+|\bm{x}_i)+q(y|\bm{x}_i)} +t\left( \frac{p(y|\bm{x}_i,\bm{\theta})}{p(y^-|\bm{x}_i,\bm{\theta})}\right) \frac{q(y^-|\bm{x}_i)}{q(y^-|\bm{x}_i) +q(y|\bm{x}_i)}  \Bigg\}\\
=&\sum_{y\in\mathcal{D}}q(y|\bm{x}_i) t\left( \frac{p(y^+|\bm{x}_i,\bm{\theta})}{p(y|\bm{x}_i,\bm{\theta})} \right)  \frac{q(y|\bm{x}_i)}{q(y^+|\bm{x}_i)+q(y|\bm{x}_i)} \\
&+ \sum_{y\in\mathcal{D}}q(y^+|\bm{x}_i) t\left( \frac{p(y^+|\bm{x}_i,\bm{\theta})}{p(y|\bm{x}_i,\bm{\theta})}\right) \frac{q(y|\bm{x}_i)}{q(y^+|\bm{x}_i) +q(y|\bm{x}_i)}\\
=& \sum_{y\in\mathcal{D}}q(y|\bm{x}_i) t\left( \frac{p(y^+|\bm{x}_i,\bm{\theta})}{p(y|\bm{x}_i,\bm{\theta})} \right).
\end{align*}

Therefore, we can conclude that
\begin{align*}
D_{\rm GSM}(q_i,p_i)=&\sum_{y\in\mathcal{D}}q(y|\bm{x}_i)\Bigg\{ t\left( \frac{p(y^+|\bm{x}_i,\bm{\theta})}{p(y|\bm{x}_i,\bm{\theta})} \right)^2\\
&+ t\left( \frac{p(y|\bm{x}_i,\bm{\theta})}{p(y^-|\bm{x}_i,\bm{\theta})}\right)^2-2t\left( \frac{p(y^+|\bm{x}_i,\bm{\theta})}{p(y|\bm{x}_i,\bm{\theta})} \right) \Bigg\}+C,
\end{align*}
which completes the entire proof.

\end{proof}

\begin{proof}[Proof of Theorem \ref{thm:: GSM unique solution}]
Based on the analysis of the transformation function $t(\cdot)$ in the proof of Theorem \ref{thm:: tractable obj funct for GSM}, we assume that $q(y|\bm{x}_i)$ and $p(y|\bm{x}_i,\bm{\theta})$ are non-zero for $y\in\mathcal{D}$. The hypothesis $D_{\rm GSM}(q_*,p_*)=0$, in conjunction with the assumption that $p(y|\bm{x}_i,\bm{\theta}_0)=q(y|\bm{x}_i)>0$, implies that all the slopes must be equal for the model and the observed data. Thus, we have
\begin{align}
\frac{p(y^+|\bm{x}_i,\bm{\theta})}{p(y|\bm{x}_i,\bm{\theta})}&=\frac{q(y^+|\bm{x}_i)}{q(y|\bm{x}_i)},\label{forward}\\
\frac{p(y|\bm{x}_i,\bm{\theta})}{p(y^-|\bm{x}_i,\bm{\theta})}&=\frac{q(y|\bm{x}_i)}{q(y^-|\bm{x}_i)} ,\label{backward}
\end{align} 
for all $y$ and $i=1,\cdots,n$. It is worth noting that the relationships (\ref{forward}) and (\ref{backward}) are equivalent. Without loss of generality, we only consider the first case (\ref{forward}) with $y^+\in\mathcal{D}$. We then obtain
\[
\frac{q(y|\bm{x}_i) }{p(y|\bm{x}_i,\bm{\theta})}=\frac{q(y^+|\bm{x}_i)}{p(y^+|\bm{x}_i,\bm{\theta})}.
\]
Applying this identity on $y_i^+$, we get
\[
\frac{q(y^+|\bm{x}_i)}{p(y^+|\bm{x}_i,\bm{\theta})}=\frac{q(y^{++}|\bm{x}_i)}{p(y^{++}|\bm{x}_i,\bm{\theta})},
\]
for $y^{++}=y+2$ and $y^{++}\in\mathcal{D}$. It can be readily seen that we can recursively apply this identity.

Now, fix any point $y^0$. Without loss of generality, we assume that $y^0\geq y$. By using the recursion above, we have
\[
\frac{q(y|\bm{x}_i) }{p(y|\bm{x}_i,\bm{\theta})}=\frac{q(y^+|\bm{x}_i)}{p(y^+|\bm{x}_i,\bm{\theta})}=\cdots=\frac{q(y^0|\bm{x}_i) }{p(y^0|\bm{x}_i,\bm{\theta})}=c,
\]
where $c$ is a constant does not depend on $y$. Therefore, we can conclude that
\[
q(y|\bm{x}_i)=c p(y|\bm{x}_i,\bm{\theta})
\]
for any $y$.

On the other hand, both $p_i$ and $q_i$ are normalized probability distributions. Thus, we must have $c=1$. This proves that if $D_{\rm GSM}(q_*,p_*)=0$, then $q(y|\bm{x}_i)=p(y|\bm{x}_i,\bm{\theta})$ for any $y$ and $i=1,\cdots,n$. Using the identifiability assumption, this implies $\bm{\theta}=\bm{\theta}_0$. Thus, we have proved that $D_{\rm GSM}(q_*,p_*)=0$ implies $\bm{\theta}=\bm{\theta}_0$. The converse is trivial.
\end{proof}

\begin{proof}[Proof of Proposition \ref{thm:: sm score function expectation}]
This proof consists of two parts. First, we consider the case when $\hat{d}=\hat{d}_{\rm SM}$ and then we derive results for $\hat{d}=\hat{d}_{\rm GSM}$.

\emph{Part I}: When $\hat{d}=\hat{d}_{\rm SM}$, $\hat{d}_{\rm SM}(\bm{\theta})=\frac{1}{n}\sum_{i=1}^n \rho_{\rm SM}(\bm{y}_i|\bm{x}_i,\bm{\theta})$, we have
\begin{align}\label{exp eq 1}
\mathbb{E}\left(\frac{\partial \hat{d}_{\rm SM}(\bm{\theta})}{\partial \bm{\theta}} \right)&=\frac{1}{n}\sum_{i=1}^n \mathbb{E}\left(\frac{\partial \rho_{\rm SM}(\bm{y}_i|\bm{x}_i,\bm{\theta})}{\partial \bm{\theta}} \right) =\frac{1}{n}\sum_{i=1}^n\frac{\partial}{\partial\bm{\theta}}\mathbb{E}\left[ \rho_{\rm SM}(\bm{y}|\bm{x}_i,\bm{\theta}) \right].
\end{align}
The last equality holds based on the interchange of integration and differentiation under Condition (C1). After interchanging the differentiation and integral sign, (\ref{exp eq 1}) then implies that
\begin{align*}
\mathbb{E}\left(\frac{\partial \hat{d}_{\rm SM}(\bm{\theta})}{\partial \bm{\theta}} \right)&=\frac{1}{n}\sum_{i=1}^n \frac{\partial}{\partial\bm{\theta}} \mathbb{E}\left( \|\nabla_{\bm{y}}\log q(\bm{y}_i|\bm{x}_i)-\nabla_{\bm{y}}\log p(\bm{y}_i|\bm{x}_i,\bm{\theta})\|^2 \right)\\
&= \frac{1}{n}\sum_{i=1}^n  \mathbb{E}\left(  \frac{\partial}{\partial\bm{\theta}} \|\nabla_{\bm{y}}\log q(\bm{y}_i|\bm{x}_i)-\nabla_{\bm{y}}\log p(\bm{y}_i|\bm{x}_i,\bm{\theta})\|^2 \right)\\
&= -\frac{1}{n}\sum_{i=1}^n  \mathbb{E}\left( 2\nabla_{\bm{y}}\left(\log q(\bm{y}_i|\bm{x}_i)-\log p(\bm{y}_i|\bm{x}_i,\bm{\theta})\right)  \frac{\partial}{\partial\bm{\theta}} \nabla_{\bm{y}}\log p(\bm{y}_i|\bm{x}_i,\bm{\theta}) \right).
\end{align*}
It can be readily seen that $\mathbb{E}\left(\frac{\partial \hat{d}_{\rm SM}(\bm{\theta}_0)}{\partial \bm{\theta}}\right)=\bm{0}$ since $\log q(\bm{y}|\bm{x}_i)=\log p(\bm{y}|\bm{x}_i,\bm{\theta}_0)$. This completes the first part of proof.

\emph{Part II}: When $\hat{d}=\hat{d}_{\rm GSM}$, under Condition (C1), we have
\begin{align*}
\mathbb{E}\left( \frac{\partial \hat{d}_{\rm GSM}(\bm{\theta})}{\partial \bm{\theta}} \right) =&\frac{1}{n}\sum_{i=1}^n \frac{\partial}{\partial \bm{\theta}}\mathbb{E}\left[ \rho_{\rm GSM}(y_i|\bm{x}_i,\bm{\theta})\right]\\
=& \frac{1}{n}\sum_{i=1}^n \frac{\partial}{\partial \bm{\theta}}\mathbb{E}\left[ D_{\rm GSM}(q_i,p_i)\right]\\
=& \frac{1}{n}\sum_{i=1}^n \sum_{y\in\mathcal{D}} q(y|\bm{x}_i) \frac{\partial}{\partial \bm{\theta}}  \Bigg\{\Bigg[ t\left( \frac{p(y^+|\bm{x}_i,\bm{\theta})}{p(y|\bm{x}_i,\bm{\theta})} \right) - t\left( \frac{q(y^+|\bm{x}_i)}{q(y|\bm{x}_i)} \right) \Bigg]^2\\
& + \Bigg[ t\left( \frac{p(y|\bm{x}_i,\bm{\theta})}{p(y^-|\bm{x}_i,\bm{\theta})}\right) - t\left( \frac{q(y|\bm{x}_i)}{q(y^-|\bm{x}_i)} \right) \Bigg]^2 \Bigg\}\\
=&\frac{1}{n}\sum_{i=1}^n \sum_{y\in\mathcal{D}} q(y|\bm{x}_i)  \Bigg\{ 2\Bigg[ t\left( \frac{p(y^+|\bm{x}_i,\bm{\theta})}{p(y|\bm{x}_i,\bm{\theta})} \right) - t\left( \frac{q(y^+|\bm{x}_i)}{q(y|\bm{x}_i)} \right) \Bigg] \\
&\times\frac{\partial}{\partial \bm{\theta}} \Bigg[ t\left( \frac{p(y^+|\bm{x}_i,\bm{\theta})}{p(y|\bm{x}_i,\bm{\theta})} \right) - t\left( \frac{q(y^+|\bm{x}_i)}{q(y|\bm{x}_i)} \right) \Bigg]\\
& + 2\Bigg[ t\left( \frac{p(y|\bm{x}_i,\bm{\theta})}{p(y^-|\bm{x}_i,\bm{\theta})}\right) - t\left( \frac{q(y|\bm{x}_i)}{q(y^-|\bm{x}_i)} \right) \Bigg]\\
&\times \frac{\partial}{\partial \bm{\theta}} \Bigg[ t\left( \frac{p(y|\bm{x}_i,\bm{\theta})}{p(y^-|\bm{x}_i,\bm{\theta})}\right) - t\left( \frac{q(y|\bm{x}_i)}{q(y^-|\bm{x}_i)} \right) \Bigg] \Bigg\}.
\end{align*}
One can be readily seen that 
\[
\mathbb{E}\left(\frac{\partial \hat{d}_{\rm GSM}(\bm{\theta}_0)}{\partial \bm{\theta}}\right)=\bm{0},
\]
which completes the entire proof.
\end{proof}

\begin{proof}[Proof of Proposition \ref{prop::clt score function}]
We use the Lyapounov theorem for triangular arrays to derive our results. For the proof of the first part of Proposition \ref{prop::clt score function}, we first consider $\hat{d}=\hat{d}_{\rm SM}$ and define a triangular array
\[
\bm{z}_{ni}=\frac{\partial \rho_{\rm SM}(\bm{y}_i|\bm{x}_i,\bm{\theta}_0)}{\partial \bm{\theta}},
\]
for $i=1,\cdots,n$. Note that we have $\mathbb{E}(\bm{z}_{ni})=\bm{0}$ and $\bm{z}_{ni}$ is actually a function of $\bm{y}_i$. Thus  $\bm{z}_{n1},\cdots,\bm{z}_{nn}$ are INID random variables with mean $\bm{0}$. Note that the Condition (C2) gives
\[
\frac{1}{n}{\rm Cov}\left(\sum_{i=1}^n\bm{z}_{ni}\right)=\mathcal{I}_n(\bm{\theta}_0)+\mathcal{J}_n(\bm{\theta}_0)\to\mathcal{I}(\bm{\theta}_0)+\mathcal{J}(\bm{\theta}_0),
\]
by Lyapounov theorem and the Lyapounov condition (C3), we get
\[
n^{\frac{1}{2}}\frac{\partial \hat{d}_{\rm SM}(\bm{\theta}_0)}{\partial \bm{\theta}}=n^{-\frac{1}{2}}\sum_{i=1}^n\bm{z}_{ni}\stackrel{d}{\longrightarrow}N\left(\bm{0},\mathcal{I}(\bm{\theta}_0)+\mathcal{J}(\bm{\theta}_0)\right).
\]

For the proof of the second part of Proposition \ref{prop::clt score function}, we slightly abuse the notion of $\bm{z}_{ni}$ and use this to define a new triangle array
\[
\bm{z}_{ni}={\rm vec}\left(\frac{\partial^2 \rho_{\rm SM}(\bm{y}_i|\bm{x}_i,\bm{\theta}_0)}{\partial \bm{\theta} \partial \bm{\theta}^\top}\right).
\] 
Under Condition (C4), one can be readily seen that
\begin{align*}
\sup_n \max_i \mathbb{E}\|\bm{z}_{ni}\|^2= \sup_n\max_i \mathbb{E}\left(\left\|\frac{\partial^2 \rho_{\rm SM}(\bm{y}_i|\bm{x}_i,\bm{\theta}_0)}{\partial \bm{\theta} \partial \bm{\theta}^\top}\right\|_F^2 \right)
<\infty,
\end{align*}
thus by the weak law of large numbers and Condition (C2), we obtain
\[
-\frac{\partial^2 \hat{d}_{\rm SM}(\bm{\theta}_0)}{\partial \bm{\theta} \partial \bm{\theta}^\top}\stackrel{p}{\longrightarrow}\mathcal{I}(\bm{\theta}_0),
\]
which completes the proof for $\hat{d}=\hat{d}_{\rm SM}$. After employing similar methods to those used in the proof for $\hat{d}=\hat{d}_{\rm SM}$, we could finish the proof for $\hat{d}=\hat{d}_{\rm GSM}$.
\end{proof}

\bigskip
\begin{center}
{\large\bf SUPPLEMENTARY MATERIAL}
\end{center}
The supplementary material consists of the equivalence of the score matching loss functions for truncated Gaussian regression models, generalized score matching for discrete data with multivariate response and derivatives of proposed score matching objective functions.

\bibliographystyle{apalike}
 \bibliography{Reference}

\end{document}